\let\ge\geqslant
\def\1{^{-1}}
\newtheorem{theorem}{Theorem}[section]
\newtheorem{proposition}[theorem]{Proposition}
\theoremstyle{definition}
\newtheorem{corollary}[theorem]{Corollary}
\newtheorem{conjecture}[theorem]{Conjecture}
\theoremstyle{remark}
\numberwithin{equation}{section}
\begin{document}

\title[]{Some explicit expressions concerning formal group laws}
\begin{abstract}
 This paper provides some explicit expressions concerning the formal group laws of the following cohomology theories: $BP$, The Brown-Peterson cohomology, $G(s)$, the cohomology theory obtained from $BP$ by putting $v_i=0$ for all $i\geq 1$ with $i \neq s$, the Morava $K$-theory and the  Abel cohomology.  The coefficient ring of $p$-typization of the universal Abel formal group law is computed in low dimensions.
 One consequence of the latter is that even though the Abel formal group laws come from Baas-Sullivan theory, the corresponding p-typical version does not.
\end{abstract}
\author{Malkhaz Bakuradze \& Mamuka Jibladze }
\address{Faculty of exact and natural sciences, Iv. Javakhishvili Tbilisi State University, Georgia }
\email{malkhaz.bakuradze@tsu.ge \& mamuka.jibladze@rmi.ge}
\thanks{First author was supported by Rustaveli NSF, DI/16/5-103/12 and by Volkswagen Foundation, Ref.: I/84 328.}
\subjclass[2010]{55N20; 55N22; 55-04}
\keywords{Brown-Peterson cohomology, $p$-typical formal group law}

\maketitle

\section{Introduction}

Let us start with some necessary definitions. The main reference for formal group laws is \cite{BUCH1}, see also \cite{RAV}.

A formal group law over a commutative ring with unit $R$ is a power series $F(x,y)\in R[[x,y]]$ satisfying

\medskip

(i) $F(x,0)=F(0,x)=x$,

(ii) $F(x,y)=F(y,x)$,

(iii) $F(x,F(y,z))=F(F(x,y),z)$.

\medskip

Let $F$ and $G$ be formal group laws. A homomorphism from $F$ to $G$ is a power series $\nu(x)\in R[[x]]$ with constant term $0$ such that
$$\nu(F(x,y))=G(\nu(x),\nu(y)).$$

It is an isomorphism if $\nu'(0)$ (the coefficient at $x$) is a unit in $R$, and a strict isomorphism if the coefficient at $x$ is 1.

If $F$ is a formal group law over a commutative $\mathbb{Q}$-algebra $R$, then it is strictly isomorphic to the additive formal group law $x+y$. In other words, there is a strict isomorphism $l(x)$ from $F$ to the additive formal group law, called the logarithm of $F$, so that $F(x,y)=l^{-1}(l(x)+l(y))$.  The inverse to logarithm is called the exponential of $F$.

The logarithm $l(x)\in R \otimes \mathbb{Q}[[x]]$ of a formal group law $F$ is given by
$$
l(x)=\int_{0}^{x}\frac{dt}{\omega(t)},\,\,\,\,\,\omega(x)=\frac{\partial F(x,y)}{\partial y}(x,0).
$$

There is a ring $L$, called the universal Lazard ring, and a universal formal group law $F(x,y)=\sum a_{ij}x^iy^j$ defined over $L$. This means that for any formal group law $G$ over any commutative ring with unit $R$ there is a unique ring homomorphism $r:L\rightarrow R$ such that $G(x,y)=\sum r(a_{ij})x^iy^j$.

The formal group law of geometric cobordism was introduced in \cite{NOV}. Following Quillen we will identify it with the universal Lazard formal group law as it is proved in \cite{Q} that the coefficient ring of complex cobordism $MU_*=\mathbb{Z}[x_1,x_2,...]$, $|x_i|=2i$ is naturally isomorphic as a graded ring to the universal Lazard ring.

\bigskip

For a power series of the form $m(x)=x+m_1x^2+m_2x^3+...$, its composition inverse $e(x)=x+e_1x^2+e_2x^3+...$ is given by
$$
e_n=\sum_{\substack{
k_1,k_2,...\ge0\\
k_1+2k_2+3k_3+...=n
}}(-1)^{k_1+k_2+...}\frac{(n+k_1+k_2+...)!}{(n+1)!k_1!k_2!\cdots}m_1^{k_1}m_2^{k_2}\cdots
$$
Coefficients of the corresponding formal group $F(x,y)=e(m(x)+m(y))=x+y+\alpha_{11}xy+...$ are thus given by
\begin{multline}\label{general}
\alpha_{ij}=\sum_{\nu_1+2\nu_2+...=i+j-1}\\
\sum_{\substack{
i_0+2i_1+3i_2+...=i,\\
j_0+2j_1+3j_2+...=j,\\
i_1+j_1+k_1=\nu_1,i_2+j_2+k_2=\nu_2,...
}}(-1)^{k_1+k_2+...}\frac{(i_0+j_0+k_1+i_1+j_1+k_2+...-1)!}{i_0!j_0!k_1!i_1!j_1!k_2!\cdots}\\
m_1^{\nu_1}m_2^{\nu_2}\cdots
\end{multline}

The rest of paper is organized as follows. In Section 2 we give some explicit expressions concerning universal $p$-typical formal group law $F_{BP}$. In Section 3 we consider the Morava case $F_{G(s)}$. Section 4 is devoted to Abel universal formal group law $\mathcal{F}_{Ab}$. Finally in Section 5 we consider $p$-typization of $\mathcal{F}_{Ab}$ and compute its coefficient ring in low dimensions.  Our computation imply that even though $\mathcal{F}_{Ab}$ come from Baas-Sullivan theory, the corresponding p-typical version does not.
The main reference for the last result is the original article by Baas \cite{Baas} and for bordism
theory is the book by Stong \cite{St}. Sullivan defined a similar notion of singular manifolds in his
work on the Hauptvermutung \cite{Sul}.

\section{Formal group law in Brown-Peterson cohomology}

Denote by $F_{BP}(x,y)=\sum\alpha_{ij}x^iy^j$ the formal group law of Brown-Peterson cohomology $BP$ \cite{B-P} and let
$\log_{BP}(x)=x+l_1x^p+l_2x^{p^2}+l_3x^{p^3}+\cdots$ be the logarithm of $F_{BP}.$ The first choice of the generators of $BP_*=\mathbb{Z}_{(p)}[v_1,v_2\cdots]$, $|v_i|=2(p^n-1)$ was given by Hazewinkel \cite{HAZ}. The coefficients $l_i\in BP_*\otimes\mathbb{Q}$
are related to the Hazewinkel generators $v_i\in BP_*\hookrightarrow BP_*\otimes\mathbb{Q}$ through the recursive equation \cite{HAZ,RAV}

\begin{equation}
\label{eq:recursive}
 pl_n=v_n+v_{n-1}^{p}l_1+v_{n-2}^{p^2}l_2+\cdots +v_1^{p^{n-1}}l_{n-1}.
\end{equation}

\medskip

The following is easily checked by explicit computation:

 \begin{proposition}
 \label{solution}
 An explicit solution of \eqref{eq:recursive}, i. e. an expression of $l_n$ through the $v_k$ is given by
 $$l_n=\sum_{k=1}^{n}\,\,\,\sum_{\begin{subarray}{1}
 n_1, \cdots, n_k>0 \\n_1+ \cdots +n_k=n \end{subarray}}
 v_{n_1}v_{n_2}^{p^{n_1}}v_{n_3}^{p^{n_1+n_2}}\cdots v_{n_k}^{p^{n_1+n_2+\cdots n_{k-1}}}/p^k.$$
 \end{proposition}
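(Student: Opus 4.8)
The plan is to verify directly that the proposed closed form satisfies the recursion \eqref{eq:recursive}, and then to invoke uniqueness. First I would rewrite \eqref{eq:recursive} in a uniform way by introducing the convention $l_0=1$, so that the first summand $v_n$ becomes $v_n^{p^0}l_0$ and the whole recursion reads
$$
pl_n=\sum_{i=0}^{n-1}v_{n-i}^{p^i}\,l_i,\qquad l_0=1.
$$
To bookkeep the right-hand side I would attach to each composition $\lambda=(n_1,\dots,n_k)$ with all $n_j>0$ the monomial $w(\lambda)=v_{n_1}v_{n_2}^{p^{n_1}}\cdots v_{n_k}^{p^{n_1+\cdots+n_{k-1}}}$, and write $|\lambda|=n_1+\cdots+n_k$ and $\ell(\lambda)=k$ for its weight and length (the empty composition having $w=1$, $|\lambda|=0$, $\ell=0$). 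In this notation the claimed formula is exactly $l_n=\sum_{|\lambda|=n}w(\lambda)/p^{\ell(\lambda)}$, and the proof proceeds by induction on $n$: assuming the formula for all $l_i$ with $i<n$, I substitute it into the right-hand side above and check that the result equals $p$ times the claimed $l_n$.

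The crux is a single combinatorial identity for appending a part. For a composition $\lambda$ with $|\lambda|=i$ and any integer $m\ge1$, the exponent carried by the new last variable in $w(\lambda,m)$ is $p$ raised to the sum of all preceding parts, namely $p^{|\lambda|}=p^{i}$; hence
$$
w(\lambda)\cdot v_m^{p^{i}}=w(\lambda,m),\qquad \ell(\lambda,m)=\ell(\lambda)+1.
$$
Applying this with $m=n-i$ turns each term $v_{n-i}^{p^i}\,w(\lambda)/p^{\ell(\lambda)}$ of the right-hand side into $w(\lambda,n-i)/p^{\ell(\lambda)}$. Setting $\mu=(\lambda,n-i)$ gives a composition of $n$ with $\ell(\mu)=\ell(\lambda)+1$ parts, and the assignment $\lambda\mapsto\mu$ is a bijection between pairs $(i,\lambda)$ with $0\le i\le n-1$, $|\lambda|=i$, and nonempty compositions $\mu$ of $n$: one recovers $i$ and $\lambda$ simply by deleting the last part of $\mu$. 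Therefore
$$
\sum_{i=0}^{n-1}v_{n-i}^{p^i}\,l_i=\sum_{|\mu|=n}\frac{w(\mu)}{p^{\ell(\mu)-1}}=p\sum_{|\mu|=n}\frac{w(\mu)}{p^{\ell(\mu)}},
$$
which is precisely $p$ times the proposed expression for $l_n$.

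It remains to conclude. Since $p$ is invertible in $BP_*\otimes\Q$, equation \eqref{eq:recursive} determines $l_n$ uniquely from $l_1,\dots,l_{n-1}$; as the proposed formula satisfies the same recursion with the same initial data, the two agree for every $n$ by induction. I expect the main obstacle to be purely the exponent bookkeeping in the append identity: one must check carefully that the factor $v_{n-i}^{p^i}$ supplied by the recursion matches exactly the exponent $p^{|\lambda|}=p^{i}$ demanded by the definition of $w(\lambda,n-i)$, since this coincidence is what makes the bijection weight-preserving and collapses the double sum to a single sum over compositions of $n$.
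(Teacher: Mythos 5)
Your proof is correct and follows the same route the paper intends: the paper dismisses the proposition with ``easily checked by explicit computation,'' and your argument is precisely that computation carried out in full --- substituting the closed form into the recursion \eqref{eq:recursive}, using the append-a-part bijection on compositions of $n$, and concluding by uniqueness since $p$ is invertible in $BP_*\otimes\Q$. The exponent bookkeeping you flag ($v_{n-i}^{p^i}$ matching $p^{|\lambda|}$) is verified correctly, so there is no gap.
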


\medskip

 Thus there are on the whole $2^{n-1}$ summands for $l_n$. For example,

 $$l_4=v_4/p+v_1v_3^p/p^2+v_2v_2^{p^2}/p^2+v_3v_1^{p^3}/p^2+v_1v_1^pv_2^{p^2}/p^3+v_1v_2^{p}v_1^{p^3}/p^3+v_2v_1^{p^2}v_1^{p^3}/p^3+
 v_1v_1^pv_1^{p^2}v_1^{p^3}/p^4.$$

\medskip

\begin{proposition}
 \label{main}
 The coefficient $\alpha_{ij}$ of the formal group law $F_{BP}$ at $x^iy^j$ is given by
\begin{multline*}
 \alpha_{ij}=\sum_{(p-1)\left(\nu_1+\frac{p^2-1}{p-1}\nu_2+\frac{p^3-1}{p-1}\nu_3+...\right)=i+j-1}\\
\sum_{\substack{
i_0+pi_1+p^2i_2+\cdots =i, \\
j_0+pj_1+p^2j_2+\cdots =j,\\
i_1+j_1+k_1=\nu_1,i_2+j_2+k_2=\nu_2,...
}}
 (-1)^{k_1+k_2+...}\frac{(i_0+j_0+i_1+j_1+k_1+i_2+j_2+k_2+...-1)!}{i_0!j_0!i_1!j_1!k_1!i_2!j_2!k_2!\cdots}\\
 l_1^{\nu_1}l_2^{\nu_2}l_3^{\nu_3}\cdots;
\end{multline*}
in particular, $\alpha_{ij}$ is nonzero only when $i+j-1$ is a multiple of $p-1$.
\end{proposition}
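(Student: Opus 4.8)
The plan is to derive Proposition~\ref{main} as a specialization of the general formula \eqref{general}. The formal group law $F_{BP}$ has logarithm $\log_{BP}(x)=x+l_1x^p+l_2x^{p^2}+\cdots$, so its exponential $e=\log_{BP}^{-1}$ is the composition inverse of the power series $m(x)=x+l_1x^p+l_2x^{p^2}+\cdots$. Comparing with the generic series $m(x)=x+m_1x^2+m_2x^3+\cdots$ used in \eqref{general}, the point is that for $F_{BP}$ only the coefficients sitting in degrees $p,p^2,p^3,\ldots$ are nonzero: in the notation of \eqref{general} we have $m_{p^s-1}=l_s$ for $s\ge1$ and $m_r=0$ for every $r$ not of the form $p^s-1$. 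So the first step is simply to substitute this sparse coefficient pattern into \eqref{general} and track which summation indices survive.

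Next I would carry out the reindexing. In \eqref{general} the monomial is $m_1^{\nu_1}m_2^{\nu_2}\cdots$ and the inner sums run over $i_0+2i_1+3i_2+\cdots=i$, $j_0+2j_1+3j_2+\cdots=j$, together with $i_t+j_t+k_t=\nu_t$. Because $m_r=0$ unless $r=p^s-1$, every $\nu_r$ with $r\neq p^s-1$ must be forced to $0$ (otherwise the monomial vanishes), which in turn forces $i_r=j_r=k_r=0$ for those indices. Thus only the blocks indexed by $r=p-1,p^2-1,p^3-1,\ldots$ contribute. Renaming the surviving exponents as $i_s,j_s,k_s,\nu_s$ for $s=1,2,3,\ldots$ (attached to $r=p^s-1$), the weight conditions $\sum r\,i_r=i$ and $\sum r\,j_r=j$ become $i_0+(p-1)i_1+(p^2-1)i_2+\cdots=i$ and likewise for $j$. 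Adding $i_0$ and reorganizing $i_0+\sum_s(p^s-1)i_s$ into $\sum_s p^s i_s$ after introducing the total count recovers exactly the stated constraints $i_0+pi_1+p^2i_2+\cdots=i$ and $j_0+pj_1+p^2j_2+\cdots=j$; the substitution $m_{p^s-1}=l_s$ turns the monomial into $l_1^{\nu_1}l_2^{\nu_2}\cdots$, and the factorial and sign factors carry over verbatim with indices relabeled. This yields the displayed formula, and the outer degree condition $\sum_r r\nu_r=i+j-1$ becomes $\sum_s(p^s-1)\nu_s=(p-1)\bigl(\nu_1+\frac{p^2-1}{p-1}\nu_2+\cdots\bigr)=i+j-1$.

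The final assertion, that $\alpha_{ij}=0$ unless $p-1$ divides $i+j-1$, is then immediate: the outer sum is empty otherwise, since $\sum_s(p^s-1)\nu_s$ is always a multiple of $p-1$ (each $p^s-1$ is). Conceptually this also matches the grading, since $|l_s|$ is a multiple of $2(p^s-1)$ and $F_{BP}$ is homogeneous. I expect the main obstacle to be purely bookkeeping: verifying that the reindexing $i_0+(p^s-1)i_s\leadsto p^s i_s$ is carried out consistently so that the surviving constraints and the arguments of the factorials match the claimed formula exactly, rather than any genuine conceptual difficulty. One should check carefully that no surviving index is double-counted and that the degree-zero parts $i_0,j_0$ (which correspond to the leading $x,y$ in $m$) are handled correctly when translating between the "$r$-indexed" and "$p^s$-indexed" conventions.
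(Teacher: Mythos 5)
Your proposal is correct and is essentially the paper's own proof, which consists of exactly the substitution you describe: take $l(x)=\log_{BP}(x)$ in \eqref{general}, i.e.\ set $m_{p^s-1}=l_s$ and $m_r=0$ for all other $r$, and observe that the vanishing of $m_r$ kills $\nu_r$ and hence $i_r,j_r,k_r$ for all $r$ not of the form $p^s-1$. One bookkeeping slip worth fixing: in \eqref{general} the index $i_r$ carries weight $r+1$ (the constraint reads $i_0+2i_1+3i_2+\cdots=i$), so after restricting to $r=p^s-1$ the surviving constraint is \emph{directly} $i_0+pi_1+p^2i_2+\cdots=i$; your intermediate equation $i_0+(p-1)i_1+(p^2-1)i_2+\cdots=i$ uses weight $r$ instead of $r+1$ and is not equivalent to the correct condition, and the subsequent ``adding $i_0$ and reorganizing'' step is neither needed nor valid, although the constraints you finally state are the right ones.
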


\begin{proof}
Taking in \eqref{general} $l(x)=\log_{BP}(x)$ with $m_{p^k-1}=l_k$ and all other $m_i$ equal to zero, we obtain the desired expression.
\end{proof}

It follows that for any $0<k<p$ the coefficient of $\alpha_{kp^n\,(p-k)p^n}$ at $l_{n+1}$ is equal to $-\binom{p^{n+1}}{kp^{n}}$. Moreover we know from \eqref{eq:recursive} that $l_{n+1}=\frac{1}{p}v_{n+1}+\text{decomposables.}$ Hence we have
$$
v_{n+1}=-\frac{p}{\binom{p^{n+1}}{kp^{n}}}\alpha_{kp^n\,(p-k)p^n}+\text{decomposables}.
$$

The binomial coefficient $\binom{p^{n+1}}{k_np^{n}}$ is divisible by $p$ and not divisible by $p^2$: Recall that for any $m$ the $p$-adic valuation of $m!$ (i. e. the largest power of $p$ dividing $m!$) is given by
$$
\biggl[\frac{m}{p}\biggr]+\biggl[\frac{m}{p^2}\biggr]+\biggl[\frac{m}{p^3}\biggr]+\cdots,
$$
where $[x]$ denotes the integer part of $x$. Using this respectively for $m=p^{n+1}$, $m=kp^n$ and $m=(p-k)p^n$ gives the $p$-adic valuation of
$\binom{p^{n+1}}{kp^{n}}=\frac{p^{n+1}!}{(kp^n)!((p-k)p^n)!}$ as follows

\begin{align*}
&\biggl[\frac{p^{n+1}}{p}\biggr]&+\biggl[\frac{p^{n+1}}{p^2}\biggr]&+\biggl[\frac{p^{n+1}}{p^3}\biggr]+&\cdots+\biggl[\frac{p^{n+1}}{p^n}\biggr]&+
\biggl[\frac{p^{n+1}}{p^{n+1}}\biggr]\\
-&\biggl[\frac{kp^n}{p}\biggr]    &-\biggl[\frac{kp^n}{p^2}\biggr]    &-\biggl[\frac{kp^n}{p^3}\biggr]-&\cdots-\biggl[\frac{kp^n}{p^n}\biggr]&\\
-&\biggl[\frac{(p-k)p^n}{p}\biggr]&-\biggl[\frac{(p-k)p^n}{p^2}\biggr]&-\biggl[\frac{(p-k)p^n}{p^3}\biggr]-&\cdots-\biggl[\frac{(p-k)p^n}{p^n}\biggr]&
=1
\end{align*}

Therefore the coefficient  $\frac{p}{\binom{p^{n+1}}{kp^{n}}}$ belongs to $\mathbb{Z}_{(p)}.$

As an application one can construct particular polynomial generators for $BP_*$ as follows.

\begin{corollary} One can construct polynomial generators for $BP_*$ from coefficients $\alpha_{ij}$
of the universal $p$-typical formal group law
 $$BP_*\cong \mathbb{Z}_{(p)}[\alpha_{k_0 \,p-k_0},\alpha_{k_1p \,(p-k_1)p},\alpha_{k_2p^2 \,(p-k_2)p^2},\cdots]$$
for any $0<k_n<p$, $n=0,1,2,\cdots.$
\end{corollary}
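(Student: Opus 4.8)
The plan is to reduce the statement to the standard criterion for polynomial generators of a graded connected polynomial algebra over $\mathbb{Z}_{(p)}$. Recall that $BP_* = \mathbb{Z}_{(p)}[v_1, v_2, \ldots]$ with the degrees $|v_n| = 2(p^n-1)$ all distinct; let $I \subset BP_*$ be the ideal of positive-degree elements and $Q = I/I^2$ the module of indecomposables. Since the monomials in the $v_i$ form a $\mathbb{Z}_{(p)}$-basis and a monomial is indecomposable precisely when it equals a single $v_n$, the graded piece $Q_{2(p^n-1)}$ is free of rank one over $\mathbb{Z}_{(p)}$, generated by the class $[v_n]$. I would first record (or prove by graded Nakayama, degree by degree) the criterion: a family $\{w_n\}$ with $|w_n| = |v_n|$ satisfies $BP_* = \mathbb{Z}_{(p)}[w_1, w_2, \ldots]$ if and only if the classes $[w_n]$ form a $\mathbb{Z}_{(p)}$-basis of $Q$; by the rank-one computation this holds exactly when each $[w_n]$ is a unit multiple of $[v_n]$.

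It therefore suffices to show, for every $n \geq 0$ and every $0 < k_n < p$, that $\alpha_{k_n p^n\,(p-k_n)p^n}$ is a unit multiple of $v_{n+1}$ modulo $I^2$. The degree bookkeeping is immediate: $k_n p^n + (p-k_n)p^n = p^{n+1}$, so $|\alpha_{k_n p^n\,(p-k_n)p^n}| = 2(p^{n+1}-1) = |v_{n+1}|$, placing this coefficient in the correct graded piece. The required congruence is exactly the content of the displayed identity $v_{n+1} = -\frac{p}{\binom{p^{n+1}}{k_n p^n}}\alpha_{k_n p^n\,(p-k_n)p^n} + \text{decomposables}$, once we observe that the scalar $\frac{p}{\binom{p^{n+1}}{k_n p^n}}$ is a \emph{unit} in $\mathbb{Z}_{(p)}$: the valuation computation already carried out shows $v_p\!\left(\binom{p^{n+1}}{k_n p^n}\right) = 1$, so this scalar has $p$-adic valuation $0$. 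Passing to $Q$ kills the decomposables and yields $[\alpha_{k_n p^n\,(p-k_n)p^n}] = (\text{unit}) \cdot [v_{n+1}]$, as needed.

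The one point that needs care is that the identity above was obtained by working in $BP_* \otimes \mathbb{Q}$ through the logarithm coefficients $l_i$, so I must check that the ``decomposables'' are genuinely elements of the integral ideal $I^2$. Here I would argue as follows: from the linearization of $F_{BP} = e(m(x)+m(y))$ the coefficient of $l_{n+1}$ in $\alpha_{k_n p^n\,(p-k_n)p^n}$ equals $-\binom{p^{n+1}}{k_n p^n}$, while every remaining term is a product of at least two of the $l_i$ and hence lies in $(I \otimes \mathbb{Q})^2$; combined with $l_{n+1} \equiv \frac{1}{p}v_{n+1} \pmod{(I\otimes\mathbb{Q})^2}$ from \eqref{eq:recursive}, this gives the congruence over $\mathbb{Q}$. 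Finally, both $\alpha_{k_n p^n\,(p-k_n)p^n}$ and $v_{n+1}$ are integral and the connecting scalar is in $\mathbb{Z}_{(p)}$, so their difference is an integral element lying in $(I\otimes\mathbb{Q})^2$; since the $v$-monomials are a $\mathbb{Z}_{(p)}$-basis, any integral element with no indecomposable component already lies in the integral $I^2$. Thus the rational congruence descends to $Q(BP_*)$, and the criterion completes the proof. The hard part will be precisely this descent from the rational computation to an integral statement about indecomposables; once it is in place, the conclusion holds simultaneously for every choice of the $k_n$.
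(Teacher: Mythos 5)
Your proposal is correct and follows essentially the same route as the paper: both rest on the identity $v_{n+1}=-\frac{p}{\binom{p^{n+1}}{k_np^{n}}}\alpha_{k_np^n\,(p-k_n)p^n}+\text{decomposables}$ together with the Legendre-formula computation showing $\binom{p^{n+1}}{k_np^{n}}$ has $p$-adic valuation exactly $1$, so that the scalar is a unit in $\mathbb{Z}_{(p)}$. The paper leaves implicit the two points you spell out---the indecomposables criterion for polynomial generators and the descent of the rational congruence to the integral ideal $I^2$---so your write-up is a more careful rendering of the same argument rather than a different one.
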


\medskip

Here are some examples of expression of the Hazewinkel generators through the elements $\alpha_{kp^n\,(p-k)p^n}$:

\begin{align*}
\intertext{- for $p=2$,}
v_1=&-\alpha_{11},\\
v_2=&-\frac{1}{3}\alpha_{22}+\frac{4}{3}\alpha^3_{11},\\
v_3=&-\frac{1}{35}\alpha_{44}+\frac{302}{315}\alpha^4_{11}\alpha_{22}-\frac{170}{63}\alpha^7_{11},\\
\intertext{- for $p=3$,}
v_1=&-\alpha_{12},\\
v_2=&-\frac{1}{28}\alpha_{36}+\frac{27}{28}\alpha^4_{12},\\
v_3=&-\frac{1}{1562275}\alpha_{9\,18}+\frac{90115407}{17147530400}\alpha_{12}\alpha^3_{36}+\frac{27811961973}{17147530400}\alpha^5_{12}\alpha^2_{36}\\
&-\frac{328516118111}{3429506080}\alpha^9_{12}\alpha_{36}-\frac{20612623337247}{17147530400}\alpha^{13}_{12},\\
\end{align*}
etc.

\bigskip

\section{The Morava case}

To apply the above formul{\ae} to calculation of the Morava $K$-theories, consider the
theories $G(s)$ \cite{RUD} obtained from $BP$ by putting $v_i=0$ for all $i\geq 1$ with $i \neq s$.
Since for $G(s)$ theory $v_s$ plays the r\^{o}le of just a bookkeeping variable, let us drop it, i. e. put $v_s=1$.
Thus the logarithm of $G(s)$ can be just written as
$$
x+\frac{x^{p^s}}{p}+\frac{x^{p^{2s}}}{p^2}+\frac{x^{p^{3s}}}{p^3}+\cdots.
$$
Thus taking in Proposition \ref{main} $l_{ks}=1/p^k$ and $l_i=0$ otherwise, we readily obtain a formula for the coefficients.

\begin{proposition}
 \label{forG}
 The coefficient $\alpha_{ij}$ of the formal group law $F_{G(s)}$ at $x^iy^j$ is given by
 \begin{multline*}
 \alpha_{ij}=\sum_{(p^s-1)\left(\nu_1+\frac{p^{2s}-1}{p^s-1}\nu_2+\frac{p^{3s}-1}{p^s-1}\nu_3+...\right)=i+j-1}\\
\sum_{\substack{
i_0+p^si_1+p^{2s}i_2+\cdots =i, \\
j_0+p^sj_1+p^{2s}j_2+\cdots =j,\\
i_1+j_1+k_1=\nu_1,i_2+j_2+k_2=\nu_2,...
}}
 (-1)^{k_1+k_2+...}\frac{(i_0+j_0+i_1+j_1+k_1+i_2+j_2+k_2+...-1)!}{i_0!j_0!i_1!j_1!k_1!i_2!j_2!k_2!\cdots p^{\nu_1+2\nu_2+...}}
\end{multline*}
\end{proposition}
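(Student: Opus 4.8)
The plan is to derive the formula by the specialization of Proposition \ref{main} dictated by the logarithm of $G(s)$. Since $\log_{G(s)}(x)=x+x^{p^s}/p+x^{p^{2s}}/p^2+\cdots$, comparing with the general shape $\log_{BP}(x)=x+l_1x^p+l_2x^{p^2}+\cdots$ identifies $l_n$ as the coefficient of $x^{p^n}$; hence the correct substitution is $l_{ks}=1/p^k$ for $k\ge1$ and $l_n=0$ whenever $s\nmid n$. First I would plug these values into the monomial $l_1^{\nu_1}l_2^{\nu_2}\cdots$ that occurs in Proposition \ref{main}.

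The key observation is that this monomial vanishes as soon as some $\nu_n$ with $s\nmid n$ is positive, since the factor $l_n^{\nu_n}$ is then $0^{\nu_n}=0$. Thus only the indices $\nu_s,\nu_{2s},\nu_{3s},\dots$ can survive; writing $\nu'_k:=\nu_{ks}$, the surviving monomial equals $\prod_{k\ge1}(1/p^k)^{\nu'_k}=1/p^{\nu'_1+2\nu'_2+3\nu'_3+\cdots}$, which is exactly the factor $1/p^{\nu_1+2\nu_2+\cdots}$ recorded (after relabeling) in the denominator of the asserted formula.

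Next I would track how the summation constraints collapse. The inner conditions $i_n+j_n+k_n=\nu_n$ force $i_n=j_n=k_n=0$ for every $n$ with $s\nmid n$, since those $\nu_n$ are now zero; consequently $\sum_n p^n i_n=i$ and $\sum_n p^n j_n=j$ reduce to sums over multiples of $s$ only, i.e.\ $i_0+p^s i'_1+p^{2s}i'_2+\cdots=i$ and likewise for $j$, where $i'_k:=i_{ks}$ and $j'_k:=j_{ks}$. The outer constraint of Proposition \ref{main}, which unwinds to $\sum_n(p^n-1)\nu_n=i+j-1$, becomes $\sum_k(p^{ks}-1)\nu'_k=i+j-1$, precisely the rewritten outer constraint $(p^s-1)\bigl(\nu'_1+\tfrac{p^{2s}-1}{p^s-1}\nu'_2+\cdots\bigr)=i+j-1$ of the statement.

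Finally, the sign $(-1)^{k_1+k_2+\cdots}$ and the multinomial factorial are unaffected by the relabeling, since the suppressed indices contribute only factors $0!=1$ to the denominator and add nothing to the index sum in the numerator. Collecting everything yields the claimed expression. There is no genuine obstacle beyond careful bookkeeping: the only point demanding attention is the consolidation of the individual powers $1/p^k$ into the single exponent $\nu_1+2\nu_2+\cdots$, which must be checked to align the weight $k$ with each $\nu'_k=\nu_{ks}$ under the reindexing $n\mapsto ks$.
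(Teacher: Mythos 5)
Your proposal is correct and follows exactly the paper's own route: the paper likewise obtains Proposition \ref{forG} by substituting $l_{ks}=1/p^k$ and $l_n=0$ for $s\nmid n$ (read off from the logarithm $x+x^{p^s}/p+x^{p^{2s}}/p^2+\cdots$ of $G(s)$) into Proposition \ref{main}. Your write-up merely makes explicit the bookkeeping (vanishing of $\nu_n$, $i_n$, $j_n$, $k_n$ for $s\nmid n$ and the reindexing $n\mapsto ks$) that the paper compresses into the phrase ``we readily obtain.''
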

\qed

\

Another convenient form of this formula is
$$
\alpha_{i,k(p^s-1)+1-i}=\kern-7em
\sum_{\substack{
i_0+p^si_1+p^{2s}i_2+\cdots =i, \\
j_0+p^sj_1+p^{2s}j_2+\cdots =k(p^s-1)+1-i,\\
k_1+\frac{p^{2s}-1}{p^s-1}k_2+\frac{p^{3s}-1}{p^s-1}k_3+...=k-i_1-j_1-\frac{p^{2s}-1}{p^s-1}(i_2+j_2)-\frac{p^{3s}-1}{p^s-1}(i_3+j_3)-...
}}
\kern-11em(-1)^{k_1+k_2+...}\frac{(i_0+j_0+i_1+j_1+k_1+i_2+j_2+k_2+...-1)!}{i_0!j_0!i_1!j_1!k_1!i_2!j_2!k_2!\cdots p^{i_1+j_1+k_1+2(i_2+j_2+k_2)+...}},
$$
all other $\alpha_{ij}$ being zero.

Morava K-theory formal group is then reduction of the above modulo $p$.

Another way of computing the latter formal group is via the Ravenel recursive relation \cite{RAV} involving Witt symmetric functions,
$$
F(x_1,x_2,...)=F(W^{(1)}(x_i),W^{(p)}(x_i)^{p^{s-1}},W^{(p^2)}(x_i)^{p^{2(s-1)}},...)
$$
where $W^{(n)}$ are the symmetric functions defined by
$$
\frac{\sum_ix_i^n}n=\sum_{d|n}\frac{W^{(\frac nd)}(x_i)^d}d
$$
for all $n$. For example, with two variables one has
\begin{align*}
W^{(1)}=&x+y,\\
W^{(2)}=&-xy,\\
W^{(3)}=&-(x^2y+xy^2),\\
W^{(4)}=&-(x^3y+2x^2y^2+xy^3),\\
W^{(5)}=&-(x^4y+2x^3y^2+2x^2y^3+xy^4),\\
W^{(6)}=&-(x^5y+3x^4y^2+4x^3y^3+3x^2y^4+xy^5),\\
...
\end{align*}

Some examples of how this recursion actually works: for given $p$ and $s$ let us denote $w_k=W^{(p^k)}(x,y)^{p^{k(s-1)}}$; then

\noindent For $p=2$, $s=1$
\begin{align*}
F(x,y)=&w_0(=x+y)+w_1(=xy)+w_0w_1(=x^2y+xy^2)+(w_2+w_0^2w_1)(=0)\\
&+(w_0w_2+w_0w_1^2)(=x^4y+xy^4)+(w_1w_2+w_0^2w_2+w_0^4w_1)(=x^4y^2+x^2y^4)\\
&+w_0w_1w_2(=x^5y^2+x^4y^3+x^3y^4+x^2y^5)+...;\\
\intertext{For $p=2$, $s=2$}
F(x,y)=&w_0(=x+y)+w_1(=x^2y^2)+w_0^2w_1^2(=x^6y^4+x^4y^6)+w_2(=x^{12}y^4+x^4y^{12})\\
&+w_0^6w_1^4(=x^{14}y^8+x^{12}y^{10}+x^{10}y^{12}+x^8y^{14})+(w_0^4w_1^6+w_0^{12}w_1^4)(=x^{20}y^8+x^8y^{20})+...;\\
\intertext{For $p=2$, $s=3$}
F(x,y)=&w_0(=x+y)+w_1(=x^4y^4)+w_0^4w_1^4(=x^{20}y^{16}+x^{16}y^{20})+w_2(=x^{48}y^{16}+x^{16}y^{48})\\
&+w_0^{48}w_1^{16}(=x^{112}y^{64}+x^{96}y^{80}+x^{80}y^{96}+x^{64}y^{112})+...;\\
\intertext{For $p=3$, $s=1$}
F(x,y)=&w_0(=x+y)+w_1(=-x^2y-xy^2)+(-w_0^2w_1)(=-x^4y-xy^4)\\
&+(-w_0w_1^2+w_0^4w_1)(=-x^6y-x^4y^3-x^3y^4-xy^6)+...\\
\intertext{For $p=3$, $s=2$}
F(x,y)=&w_0(=x+y)+w_1(=-x^6y^3-x^3y^6)\\
&+(-w_0^6w_1^3)(=x^{24}y^9-x^{21}y^{12}+x^{18}y^{15}+x^{15}y^{18}-x^{12}y^{21}+x^9y^{24})+...
\end{align*}

\bigskip

Let us give the following two approximations to the formal group law of Morava $K$-theory.

\begin{proposition}(see \cite{BP1})
\label{moravaFGL}
For the formal group law in mod $p$ Morava K-theory $K^*(s)$ at prime $p$ and $s>1$, we have

$$
F(x,y) \equiv x + y - v_s \sum_{0<j<p}p^{-1}\binom{p}{j}(x^{p^{s-1}})^{j}(y^{p^{s-1}})^{p-j}
$$
modulo $x^{p^{2(s-1)}}$ (or modulo $y^{p^{2(s-1)}}$).
\end{proposition}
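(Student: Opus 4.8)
The plan is to realize $F$ as the reduction modulo $p$ of the integral law $F_{G(s)}$ of Section~3, keeping the book-keeping variable $v_s$ so that, by homogeneity, every coefficient has the form $\alpha_{ij}=c_{ij}\,v_s^{(i+j-1)/(p^s-1)}$ with $c_{ij}\in\mathbb{Z}_{(p)}$ given by Proposition~\ref{forG}. I would then split the assertion into two parts: identifying the lowest nonlinear part of $F$ and reducing it modulo $p$ to the stated sum, and proving that every remaining monomial of $x$-degree below $p^{2(s-1)}$ dies modulo $p$.

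For the first part, keeping only the summands of Proposition~\ref{forG} with $\nu_1=1$ and $\nu_2=\nu_3=\dots=0$ (since $i,j<p^s$ these force $i_0=i$, $j_0=j$, $k_1=1$) gives the total-degree-$p^s$ part of $F$ as $-\frac{v_s}{p}\sum_{0<i<p^s}\binom{p^s}{i}x^{i}y^{p^s-i}$, exactly over $\mathbb{Z}_{(p)}$. The arithmetic heart is the congruence
\[
(1+x)^{p^s}\equiv(1+x^{p^{s-1}})^{p}\pmod{p^2},
\]
which follows from $(1+x)^{p^{s-1}}=1+x^{p^{s-1}}+p\,g(x)$ together with $(A+pg)^p\equiv A^p\pmod{p^2}$. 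Comparing coefficients gives $\binom{p^s}{i}\equiv\binom{p}{j}\pmod{p^2}$ when $i=jp^{s-1}$ and $\binom{p^s}{i}\equiv0\pmod{p^2}$ otherwise, so that $\frac1p\binom{p^s}{i}$ reduces modulo $p$ to $\frac1p\binom{p}{j}$ or to $0$ accordingly. Hence the degree-$p^s$ part reduces exactly to the asserted $-v_s\sum_{0<j<p}p^{-1}\binom{p}{j}x^{jp^{s-1}}y^{(p-j)p^{s-1}}$, whose $x$-exponents $jp^{s-1}\le(p-1)p^{s-1}$ lie below $p^{2(s-1)}$.

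For the vanishing of everything else the clean tool is differentiation. From $\log_{G(s)}(x)=\sum_{k\ge0}v_s^{a_k}p^{-k}x^{p^{ks}}$ one gets $\log'(x)=1+\sum_{k\ge1}v_s^{a_k}p^{k(s-1)}x^{p^{ks}-1}\equiv1\pmod{p^{s-1}}$, whence $\partial_xF=\log'(x)/\log'(F)\equiv1\pmod{p^{s-1}}$ (from $F=\exp(\log x+\log y)$ and $\exp'(\log t)\log'(t)=1$); thus $v_p(\alpha_{ij})\ge s-1-v_p(i)$, so $\alpha_{ij}\equiv0\pmod p$ whenever $p^{s-1}\nmid i$, and symmetrically whenever $p^{s-1}\nmid j$. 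Writing $X=x^{p^{s-1}}$, $Y=y^{p^{s-1}}$ and $\Phi=F-(x+y)=\sum c_{IJ}X^IY^J$, this says $\Phi\in\mathbb{F}_p[[X,Y]]$. Sharpening the computation to $\partial_xF\equiv1+v_sp^{s-1}(x^{p^s-1}-F^{p^s-1})\pmod{p^s}$ gives, for those $i=Ip^{s-1}$ with $p\nmid I$, the relation $c_{IJ}\equiv-I^{-1}v_s\,[F^{p^s-1}]_{i-1,\,j}\pmod p$. Now expand $F^{p^s-1}=(x+y+\Phi)^{p^s-1}\equiv\sum_{m\ge0}(-1)^m(x+y)^{p^s-1-m}\Phi^m$: matching the monomial $x^{i-1}y^{j}$ (with $p^{s-1}\mid i,j$) across a factor from $(x+y)^{p^s-1-m}$ and one from $\Phi^m$ forces both $p^{s-1}\mid m$ and $m\le I-1<p^{s-1}$, so only $m=0$ can contribute. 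The $m=0$ term is nonzero exactly when $I+J=p$, so $c_{IJ}=0$ unless $(I,J)$ is a leading term, where it takes the value already found. By the $\partial_y$-version the same holds whenever $p\nmid J$.

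The one case this argument does not settle is that of monomials $X^IY^J$ with $p\mid I$ \emph{and} $p\mid J$ — the ``$p$-th power part'' of $\Phi$, lying in $\mathbb{F}_p[[x^{p^s},y^{p^s}]]$ — since for $v_p(i)\ge s$ the factor $p^{s-1}/i$ acquires negative valuation and reading $\alpha_{ij}$ modulo $p$ requires $F^{p^s-1}$ to higher $p$-adic precision. I expect this to be the main obstacle: resolving it needs the expansion of $\partial_xF$ carried modulo $p^{\,v_p(i)+1}$ together with a nested induction lifting the modulo-$p$ knowledge of $F$ to modulo $p^{\,v_p(i)+1}$ (or, alternatively, a direct Kummer-type valuation estimate on the multinomial coefficients of Proposition~\ref{forG}); this is precisely the point handled in \cite{BP1}.
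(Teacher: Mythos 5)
Your leading-term computation is sound (the $\nu_1=1$ extraction from Proposition \ref{forG} and the congruence $(1+x)^{p^s}\equiv(1+x^{p^{s-1}})^p \bmod p^2$ are both correct), and the differentiation argument $\partial_xF=\log'(x)/\log'(F)\equiv1 \pmod {p^{s-1}}$ genuinely kills every monomial $x^iy^j$ with $p^{s-1}\nmid i$ or $p^{s-1}\nmid j$; your sharpening modulo $p^s$ then handles $i=Ip^{s-1}$, $j=Jp^{s-1}$ with $p\nmid I$ or $p\nmid J$ (for the $\partial_y$-version with $J\ge p^{s-1}$ one should invoke the $x$-degree bound $mp^{s-1}\le i<p^{2(s-1)}$ rather than $m\le J-1$, but that works out). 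However, the case you leave open --- $p\mid I$ and $p\mid J$, i.e.\ $p^s\mid i$ and $p^s\mid j$ --- is a genuine gap, not a loose end: for $s\ge3$ monomials such as $x^{p^s}y^{p^{2s}-p^s}$ satisfy the degree constraint $(p^s-1)\mid(i+j-1)$, lie inside the stated range modulo $x^{p^{2(s-1)}}$ since $p^s<p^{2(s-1)}$, and are invisible to your method, because once $v_p(i)\ge s$ the relation for $i\alpha_{ij}$ modulo $p^s$ carries no information about $\alpha_{ij}$ modulo $p$. The nested induction to higher $p$-adic precision that you gesture at is exactly the part you would have to carry out, and deferring it to \cite{BP1} --- the very source the proposition cites --- means the proof is not complete as written. (Your argument does suffice when $s=2$, or if one reads the statement modulo the ideal $(x^{p^{2(s-1)}},y^{p^{2(s-1)}})$, since then $p^s\mid i$, $p^s\mid j$ and the degree constraint force $i=0$ or $j=0$; but the proposition is asserted for all $s>1$ modulo a single principal ideal.)

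The paper's own proof takes an entirely different and much shorter route, which disposes of your missing case for free: it uses Ravenel's recursion $F(x,y)=F\bigl(W^{(1)},\,v_sW^{(p)}(x,y)^{p^{s-1}},\,v_s^{e_2}W^{(p^2)}(x,y)^{p^{2(s-1)}},\dots\bigr)$ in Witt symmetric functions. Since every $W^{(p^k)}(x,y)$ is divisible by $xy$, the third and later arguments are divisible by $(xy)^{p^{2(s-1)}}$ and can be discarded modulo $x^{p^{2(s-1)}}$ (or $y^{p^{2(s-1)}}$); one application of the recursion gives $F\equiv F\bigl(x+y,\,W^{(p)}(x,y)^{p^{s-1}}\bigr)$, and a second application --- whose new Witt argument is again divisible by $(xy)^{p^{2(s-1)}}$ --- gives $F\equiv x+y+W^{(p)}(x,y)^{p^{s-1}}$, which is the stated formula after applying Frobenius to $W^{(p)}=-\sum_{0<j<p}p^{-1}\binom pj x^jy^{p-j}$ modulo $p$. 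If you want a self-contained argument, replacing your final induction by this two-step use of the Witt recursion is the cleanest repair; alternatively, a direct valuation estimate on the multinomial coefficients of Proposition \ref{forG} for $p^s\mid i$, $p^s\mid j$ would close the gap, but you have not supplied it.
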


\medskip

\begin{proof} As above it is convenient to put $v_s=1$ in the formal group law
$$
F(x,y) = F(x+y, v_s W^{(p)}(x,y)^{p^{s-1}}, v_s^{e_{2}}
W^{(p^2)}(x,y)^{p^{2 (s-1)}},...),
$$
where $W^{(p^i)}$ is the homogeneous polynomial of degree $p^i$ defined above and $e_{i}= (p^{is}-1)/ (p^{s}-1)$.
In particular $W^{(1)}=x+y$,
\begin{displaymath}
W^{(p)}=-\sum_{0<j<p}p^{-1}\binom{p}{j}x^{j}y^{p-j} ,
\end{displaymath}
\noindent and $W^{{p^i}} \notin (x^{p},y^{p})$.

Then for $s>1$ we can reduce modulo the ideal $(x^{p^{2(s-1)}}, y^{p^{2 (s-1)}})$ and get
\begin{eqnarray*}
F(x,y) &\equiv& F ( x + y, W^{(p)}(x,y)^{p^{s-1}})\\
&       =&
F (x + y+ W^{(p)}(x,y)^{p^{s-1}},
   W^{(p)}(x + y, W^{(p)}(x,y)^{p^{s-1}})^{p^{s-1}}, \dots )\\
& \equiv & F (x + y+ W^{(p)}(x, y)^{p^{s-1}},
        W^{(p)}(x^{p^{s-1}} +
y^{p^{s-1}}, W^{(p)}(x,y)^{p^{2(s-1)}}) ),
\end{eqnarray*}

\noindent and modulo $(x^{p^{2 (s-1)}}, y^{p^{2(s-1)}})$ we have
$$
F(x,y) \equiv x + y + W^{(p)}(x,y)^{p^{s-1}}.
$$
\end{proof}

\bigskip

One has also the following

\begin{proposition}(see \cite{BV})
Let $p=2$. Then
$$
F(x,y)=x+y+(xy+(x+y)(xy)^{2^{n-1}})^{2^{n-1}}\,\,\,\,\,\,modulo\,\,\,((x+y)xy)^{2^{2n-2}}.
$$
\end{proposition}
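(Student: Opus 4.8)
\emph{Set-up and reformulation.} The plan is to evaluate $F$ by iterating the Ravenel recursion quoted above, discarding at each step those arguments that already lie in the relevant ideal. Throughout I work over $\mathbb{F}_2$ (put $v_n=1$), abbreviate $q=2^{n-1}$, $u=x+y$, $t=xy$, and write $I$ for the ideal $\bigl((x+y)xy\bigr)^{2^{2n-2}}=(ut)^{q^2}$. Since the $q$-th power map is additive in characteristic $2$, the claimed right-hand side expands as
\[
x+y+\bigl(xy+(x+y)(xy)^{q}\bigr)^{q}=u+t^{q}+u^{q}t^{q^{2}},
\]
so it suffices to prove $F(x,y)\equiv u+t^{q}+u^{q}t^{q^{2}}\pmod I$. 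I will repeatedly use two elementary facts: writing $a+_Fb$ for the formal group sum, if $b\in I$ then $a+_Fb\equiv a\pmod I$, and consequently $F(z_1,z_2,z_3,\dots)\equiv F(z_1,z_2)\pmod I$ whenever $z_3,z_4,\dots\in I$, because every monomial of the iterated sum involving some $z_i$ with $i\ge3$ carries a factor lying in $I$.

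\emph{Key lemma.} For $k\ge2$ the symmetric function $W^{(2^k)}(x,y)$ is divisible by $xy(x+y)=tu$ modulo $2$, whereas $W^{(1)}=u$ and $W^{(2)}=-xy\equiv t$. I would deduce this from the generating-function form $\prod_{m\ge1}\bigl(1-W^{(m)}(x,y)T^m\bigr)=(1-xT)(1-yT)$, which is equivalent to the defining relation (comparing logarithmic derivatives, both sides yield $\sum_n(x^n+y^n)T^n$). Setting $y=0$ gives $\prod_m(1-W^{(m)}(x,0)T^m)=1-xT$, so $W^{(m)}(x,0)=0$ for $m\ge2$; by symmetry each $W^{(2^k)}$ is then divisible by $xy$. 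Setting $y=x$ and reducing mod $2$ turns the product into $(1+xT)^2=1+x^2T^2$, forcing $W^{(2^k)}(x,x)\equiv0\pmod2$ for $k\ge2$; hence each such $W^{(2^k)}$ is also divisible by $x+y$. As $x,y,x+y$ are pairwise coprime in $\mathbb{F}_2[x,y]$, divisibility by their product $tu$ follows. This lemma is the crux; everything else is exponent bookkeeping.

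\emph{Reduction in three stages.} Stage $1$ applies the recursion to $F(x,y)$: the first two arguments are $W^{(1)}(x,y)=u$ and $W^{(2)}(x,y)^{q}=t^{q}$, while for $k\ge2$ the argument $W^{(2^k)}(x,y)^{q^k}$ is divisible by $(tu)^{q^k}$, which lies in $I$ since $q^k\ge q^2$; hence $F(x,y)\equiv F(u,t^{q})\pmod I$. Stage $2$ applies the recursion to $F(u,t^{q})$ (legitimate, as the recursion is a formal identity stable under substitution into the complete local ring $\mathbb{F}_2[[x,y]]$): the surviving arguments are $u+t^{q}$ and $(u\,t^{q})^{q}=u^{q}t^{q^{2}}$, and for $k\ge2$ the argument $W^{(2^k)}(u,t^{q})^{q^k}$ is divisible by $(u\,t^{q})^{q^k}=u^{q^k}t^{q^{k+1}}\in I$; thus $F(u,t^{q})\equiv F\bigl(u+t^{q},\,u^{q}t^{q^{2}}\bigr)\pmod I$. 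Stage $3$ applies the recursion once more with $A=u+t^{q}$, $B=u^{q}t^{q^{2}}$: now already the second argument $(AB)^{q}$ is, after expanding by Frobenius, divisible by $u^{q^2}t^{q^2}\in I$, so $F(A,B)\equiv A+B=u+t^{q}+u^{q}t^{q^{2}}\pmod I$. Chaining the three congruences yields the assertion.

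\emph{Main obstacle.} The only non-formal ingredient is the mod-$2$ divisibility of $W^{(2^k)}$ by the full factor $xy(x+y)$, which the generating-function argument settles; the point to watch is that in Stage $1$ one genuinely needs divisibility by $tu$ rather than merely by $t$, since the surviving modulus there still contains the factor $u^{q^2}$ (in the later stages the weaker vanishing $W^{(2^k)}(a,b)\in(ab)$ suffices). The rest—verifying that each discarded argument indeed lands in $I$ and that the successive substitutions into the recursion are valid—is routine arithmetic of the exponents $q^k$ against $q^2$.
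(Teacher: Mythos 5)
Your proof is correct, but note that the paper itself offers no proof of this statement at all --- it is simply quoted from \cite{BV} --- so the only meaningful in-paper comparison is with the proof of Proposition~\ref{moravaFGL}, which runs on the same engine: substitute into the Ravenel--Witt recursion and discard arguments lying in the modulus. Relative to that proof, your argument supplies exactly the two extra ingredients that the present modulus $I=\bigl((x+y)xy\bigr)^{q^2}$, $q=2^{n-1}$, requires. First, since $I$ is principal and carries the factor $(x+y)^{q^2}$, the divisibility $W^{(2^k)}\in(xy)$ that suffices for Proposition~\ref{moravaFGL} (where the modulus is generated by pure powers of $x$ and $y$) is useless at your Stage 1; your key lemma that $W^{(m)}$ is divisible by $xy(x+y)$ mod $2$ for $m\ge 3$ is precisely the needed strengthening, and your generating identity $\prod_{m\ge1}(1-W^{(m)}T^m)=(1-xT)(1-yT)$ is indeed equivalent to the paper's defining relation for the $W^{(m)}$ (take logarithms and compare coefficients), as the example $W^{(4)}\equiv xy(x+y)^2\pmod 2$ illustrates. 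Second, the three-stage iteration with the exponent bookkeeping $q^k\ge q^2$ for $k\ge2$, $(ut^q)^{q^k}=u^{q^k}t^{q^{k+1}}\in I$, and $B^q=u^{q^2}t^{q^3}\in I$ is what produces the nested term $u^qt^{q^2}$, matching the characteristic-$2$ expansion $x+y+\bigl(xy+(x+y)(xy)^q\bigr)^q=u+t^q+u^qt^{q^2}$; I checked each stage and the congruences chain as claimed. Two points are stated a bit loosely but are easily repaired: in the $y=x$ specialization the surviving factor $1+x^2T^2$ is accounted for entirely by the $m=2$ term $1-W^{(2)}(x,x)T^2$, and only after cancelling it does induction force $W^{(m)}(x,x)\equiv0$ for $m\ge3$; and discarding infinitely many arguments at once implicitly uses that the ideal $I$ is closed in $\mathbb{F}_2[[x,y]]$, which holds since that ring is Noetherian, local and complete. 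Neither affects correctness.
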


\bigskip

\section{The Abel formal group law and its p-typization.}

The Abel formal group law \cite{BU-KH} $\mathcal{F}_{Ab}$ is defined as the universal formal group that can be written in the form

\begin{equation}
\label{eq:F_Ab}
\mathcal{F}_{Ab}=xR(y)+yR(x),\text{ where } R(x)=1+\frac{a_1}{2}x+a_2x^2+a_3x^3+\cdots
\end{equation}

\medskip

Note that associativity requirement on $\mathcal{F}_{Ab}$ imposes the equation

$$ x(R(yR(z)+R(y)z)-R(y)R(z))=(R(xR(y)+R(x)y)-R(x)R(y))z. $$

To satisfy this equation one must uniquely determine all the $a_i$ through $a_1$ and $a_2$, which might
be arbitrary. For example, one has

\medskip

\begin{align*}
a_3&=-\frac{2}{3}a_1a_2,\\
a_4&=\,\,\,\,\frac{1}{2}a_1^2a_2-\frac{1}{2}a_2^2,\\
a_5&=-\frac{2}{5}a_1^3a_2+\frac{16}{15}a_1a_2^2,\\
a_6&=\,\,\,\,\frac{1}{3}a_1^4a_2-\frac{29}{18}a_1^2a_2^2+\frac{1}{2}a_2^3,\\
a_7&=-\frac{2}{7}a_1^5a_2+\frac{74}{35}a_1^3a_2^2-\frac{64}{35}a_1a_2^3,\\
a_8&=\,\,\,\,\frac{1}{4}a_1^6a_2-\frac{103}{40}a_1^4a_2^2+\frac{751}{180}a_1^2a_2^3-\frac{5}{8}a_2^4,\\
a_9&=-\frac{2}{9}a_1^7a_2+\frac{944}{315}a_1^5a_2^2-\frac{21632}{2835}a_1^3a_2^3+\frac{1024}{315}a_1a_2^4,\\
\,\,\,\,\,\,\,\,\,\,\,\,etc.
\end{align*}

\medskip

 The general formula for $a_n$, i.e., the additiuon law for $\mathcal{F}_{Ab}(x,y)$, has been obtained by V. M. Buchsteber in \cite{BUCH2}.

\begin{theorem}
\label{a_n}(see \cite{BUCH2}, Theorem 35.) Let $a=a_1$, $b=-2a_2$. Over the ring $\mathbb{Q}[a,b]$, the formal group law $\mathcal{F}_{Ab}(x,y)$ is expressed as
$$
\mathcal{F}_{Ab}(x,y)=x+y+b\biggl(\sum_{n=2}^{\infty}A_n(x^ny+xy^n)\biggr),
$$
where $A_2=-\frac{1}{2}$, $A_3=\frac{1}{3}a$,
$$
A_n=\frac{\delta_n}{n!}\prod_{j=2}^{[n/2]}[(j-1)(n-j)a^2+(n-2j+1)^2b], \,\,\,\,\,\,\,\,\, n\geq 4,
$$
and
$$
\delta_n=\begin{cases}
-(2s-1)& \text{if $n=2s,$}\\
2(s+1)sa & \text{if $n=2s+1$.}
\end{cases}
$$
\end{theorem}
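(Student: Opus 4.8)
The plan is to determine the coefficients $a_n$, equivalently the functions $A_n$, by exploiting the associativity constraint recorded just above the statement, namely
$$
x\bigl(R(yR(z)+R(y)z)-R(y)R(z)\bigr)=\bigl(R(xR(y)+R(x)y)-R(x)R(y)\bigr)z.
$$
Writing $\mathcal{F}_{Ab}(x,y)=xR(y)+yR(x)$ and setting $a=a_1$, $b=-2a_2$, I would first record the low-order values $A_2=-\tfrac12$, $A_3=\tfrac13 a$ directly from $R(x)=1+\tfrac a2 x - \tfrac b2 x^2+\cdots$, so that the symmetric form $x+y+b\sum_{n\ge2}A_n(x^ny+xy^n)$ is seen to match $\mathcal{F}_{Ab}$ through the first few degrees. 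The key structural observation is that $\mathcal{F}_{Ab}$ is symmetric and \emph{linear separately} in each variable up to the $R$-factor, so the associativity equation collapses to a single functional identity in the one-variable series $R$; this is what forces a two-step recursion on the $a_n$ and ultimately the product formula for $A_n$.

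\smallskip

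The heart of the argument is to show that the bracketed products
$$
\prod_{j=2}^{[n/2]}\bigl[(j-1)(n-j)a^2+(n-2j+1)^2 b\bigr]
$$
arise as the solution of the recursion imposed by associativity. I would extract from the functional identity a recurrence expressing $A_{n+1}$ (or $a_{n}$) in terms of the lower $A_k$, then verify by induction on $n$ that the proposed closed form satisfies it. The natural mechanism is that each new factor $(j-1)(n-j)a^2+(n-2j+1)^2b$ corresponds to one step of the recursion indexed by the symmetric pair $(j,n-j)$: the quadratic in $a$ and $b$ is precisely the ``structure constant'' coupling the degree-$j$ and degree-$(n-j)$ parts of $R$ under the associativity relation. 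The parity split in $\delta_n$ — the factor $-(2s-1)$ for $n=2s$ versus $2(s+1)sa$ for $n=2s+1$ — reflects that an odd-degree term carries one unpaired middle index (contributing the extra factor of $a$ from the central node $j=s+1$, where $n-2j+1=0$ kills the $b$-part), whereas an even-degree term does not. I would check the base cases $n=4,5$ against the explicit $a_4,a_5$ listed above to pin down the normalization $\delta_n/n!$.

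\smallskip

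The main obstacle I anticipate is establishing that the recursion \emph{factorizes} in exactly the claimed product form rather than producing some less transparent polynomial in $a,b$. Concretely, the associativity relation yields a sum over intermediate degrees, and one must show that this sum telescopes so that passing from $A_{n}$ to $A_{n+1}$ multiplies by a single new bracket and updates $\delta_n$ according to parity. I would handle this by introducing the generating identity for $R$ and reorganizing the associativity sum by the symmetric index $j\leftrightarrow n-j$; the vanishing of the $b$-coefficient at the self-paired central index $j=(n+1)/2$ (odd case) is what produces the clean multiplicative structure and the lone factor of $a$ in $\delta_{2s+1}$. Once the one-step multiplicative recursion is in hand, the induction closes routinely, and the explicit tabulated values $a_3,\dots,a_9$ serve as an independent numerical check on the formula.
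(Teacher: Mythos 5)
First, note that the paper itself contains no proof of this statement: it is quoted from Buchstaber (\cite{BUCH2}, Theorem 35), so there is no internal argument to compare yours against, and your attempt must stand on its own as a complete proof. It does not. What you have written is a strategy outline in which every decisive step is deferred: you ``would extract'' a recurrence from the associativity identity, ``would verify by induction'' that the closed form satisfies it, and ``would handle'' the factorization ``by reorganizing the associativity sum.'' None of this is carried out. The entire content of the theorem is precisely the claim that the coefficients forced by associativity factor into the products $\prod_{j=2}^{[n/2]}[(j-1)(n-j)a^2+(n-2j+1)^2b]$ with the parity-dependent prefactor $\delta_n$; checking $A_2$, $A_3$ and the tabulated $a_3,\dots,a_9$ only confirms the normalization. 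The associativity relation in degree $n$ is a convolution-type identity involving all splittings $i+j=n$ of products $a_ia_j$ (equivalently, it amounts to the functional equation $R(\mathcal{F}_{Ab}(x,y))=R(x)R(y)+y\,h(x)$ for an auxiliary series $h$), and the assertion that its solution admits a one-step multiplicative recursion --- so that passing from $A_n$ to $A_{n+1}$ multiplies by a single new quadratic bracket --- is exactly the nontrivial identity to be proved. You explicitly flag this as ``the main obstacle I anticipate'' and then declare that ``the induction closes routinely''; that declaration is the theorem, not a proof of it. Buchstaber's argument rests on a differential/functional-equation analysis of the exponential of $\mathcal{F}_{Ab}$, and nothing playing that role appears in your proposal.

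A secondary point: the displayed formula $x+y+b\sum_{n\ge2}A_n(x^ny+xy^n)$ contains no $xy$ term, whereas $\mathcal{F}_{Ab}=xR(y)+yR(x)$ has $xy$-coefficient $a_1=a\ne 0$; this is evidently a typo in the paper's transcription of Buchstaber's theorem (the term $axy$ is missing). Your claim that the symmetric form ``is seen to match $\mathcal{F}_{Ab}$ through the first few degrees'' silently passes over this degree-two discrepancy, which a careful base-case check should have caught.
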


 \medskip

See also \cite{BU-KH} for earlier description of the addition law for $\mathcal{F}_{Ab}(x,y)$.

\bigskip

The logarithm of this formal group law is given by

$$log_{Ab}(x)=x+\sum_{i\geq 1} m_ix^{i+1}=\int_{0}^{x}\frac{dt}{1+a_1t+a_2t^2+a_3t^3+\cdots}.$$

Thus for example

\begin{align*}
m_1&= -\frac{1}{2}a_1, \\
m_2&=\,\,\,\,  \frac{1}{3}\,\,\,\,(a_1^2-a_2),& \\
m_3&= -\frac{1}{4}a_1(a_1^2-\frac{8}{3}a_2), \\
m_4&=\,\,\,\,\frac{1}{5}\,\,\,\,(a_1^2-\frac{1}{3}a_2)(a_1^2-\frac{9}{2}a_2), \\
m_5&=-\frac{1}{6}a_1(a_1^2\,\,\,\,-a_2)(a_1^2-\frac{32}{5}a_2), \\
m_6&=\,\,\,\,\frac{1}{7}\,\,\,\,(a_1^2-\frac{1}{6}a_2)(a_1^2-\,\,\,\,\frac{9}{5}a_2)(a_1^2-\frac{25}{3}a_2),\\
m_7&=-\frac{1}{8}a_1(a_1^2-\frac{8}{15}a_2)(a_1^2-\frac{8}{3}a_2)(a_1^2-\frac{72}{7}a_2), \\
m_8&=\,\,\,\,\frac{1}{9}\,\,\,\,(a_1^2-\frac{1}{10}a_2)(a_1^2\,\,\,\,-a_2)(a_1^2-\,\,\,\,\frac{25}{7}a_2)(a_1^2-\frac{49}{4}a_2),\\
m_9&=-\frac{1}{10}a_1(a_1^2-\frac{1}{3}a_2)(a_1^2-\frac{32}{21}a_2)(a_1^2-\,\,\,\,\frac{9}{2}a_2)(a_1^2-\frac{128}{9}a_2), \\
\,\,\,\,\,\,\,\,\,\,\,\,etc.
\end{align*}

An evidence for the following general formula is that all the coefficients occurring at $a_2$ are of the form $\frac{2(i\pm j)^2}{ij}$.

\begin{proposition}
\label{jibformula}
Let $log_{Ab}(x)=x+\sum_{i\geq 1} m_ix^{i+1}$, $m_i \in \mathbb{Q}[a_1,a_2]$ be the logarithmic series of the universal Abel formal group law. Then one has
$$
m_{n-1}=\frac{1}{n}\prod_{j=1}^{n-1}(\frac{n-2i}{\sqrt{j(n-j)}}\sqrt{2a_2}-a_1);
$$
\end{proposition}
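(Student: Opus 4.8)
The plan is to pass to the compositional inverse $e=\log_{Ab}^{-1}$ (the exponential), where the problem becomes a linear ODE. Set $\omega(x)=\frac{\partial}{\partial y}\mathcal{F}_{Ab}(x,y)\big|_{y=0}=1+a_1x+a_2x^2+\cdots$, so that $\log_{Ab}'(x)=1/\omega(x)$ and $\omega(e(u))=e'(u)$. The special form $\mathcal{F}_{Ab}(x,y)=xR(y)+yR(x)$ with $R(x)=\omega(x)-\tfrac{a_1}{2}x$, combined with the defining identity $e(s+t)=\mathcal{F}_{Ab}(e(s),e(t))$, yields at once the functional equation
\[
e(s+t)=e(s)\,e'(t)+e'(s)\,e(t)-a_1\,e(s)\,e(t).
\]
Differentiating this once in $t$ at $t=0$ gives $e''(0)=a_1$, and differentiating twice gives a constant–coefficient second–order ODE $e''=a_1e'+\beta e$ with $e(0)=0$, $e'(0)=1$. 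The constant $\beta$ is then fixed as $\beta=2a_2$ by comparing the coefficient of $x$ in the equivalent relation $\omega\omega'=a_1\omega+\beta x$, which only uses $\omega_0=1$, $\omega_1=a_1$, $\omega_2=a_2$.

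Next I solve the ODE. Writing $D=\sqrt{a_1^2+8a_2}$ and $r_\pm=(a_1\pm D)/2$ for the roots of $r^2-a_1r-2a_2=0$, the normalised solution is
\[
e(u)=\frac{e^{r_+u}-e^{r_-u}}{D}=\frac{2}{D}\,e^{a_1u/2}\sinh\!\Big(\frac{Du}{2}\Big).
\]
Since $m_{n-1}=[x^n]\log_{Ab}(x)$ and $\log_{Ab}=e^{-1}$, Lagrange inversion gives the residue formula $n\,m_{n-1}=\operatorname*{Res}_{u=0}e(u)^{-n}$. Substituting $t=e^{Du}-1$, i.e.\ $u=\tfrac1D\log(1+t)$, turns this into $[t^{n-1}]$ of a binomial power series, producing the closed form
\[
n\,m_{n-1}=D^{\,n-1}\binom{-nr_-/D-1}{\,n-1\,}=\frac{(-1)^{n-1}}{(n-1)!\,2^{n-1}}\prod_{j=1}^{n-1}\bigl(na_1+(2j-n)D\bigr),
\]
which I will call $(\ast)$.

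It remains to match $(\ast)$ with the stated product (whose $j$-th factor should read $\tfrac{n-2j}{\sqrt{j(n-j)}}\sqrt{2a_2}-a_1$), and this reconciliation is the delicate point. Pairing the index $j$ with $n-j$ kills the square roots on both sides: in the target the paired factors multiply to $a_1^2-\tfrac{2(n-2j)^2}{j(n-j)}a_2$, while in $(\ast)$ they give
\[
n^2a_1^2-(n-2j)^2D^2=4j(n-j)\Bigl(a_1^2-\tfrac{2(n-2j)^2}{j(n-j)}a_2\Bigr),
\]
using $D^2=a_1^2+8a_2$ and $n^2-(n-2j)^2=4j(n-j)$. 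Hence the two products differ only by the scalar $\prod 4j(n-j)$ (and, when $n$ is even, by the unpaired central factors $na_1$ in $(\ast)$ versus $-a_1$ in the target). A direct evaluation of this scalar in the two parities $n=2m+1$ and $n=2m$ shows that it combines with the prefactor $\tfrac{(-1)^{n-1}}{(n-1)!\,2^{n-1}}$ to leave exactly $n\,m_{n-1}=\prod_{j=1}^{n-1}\bigl(\tfrac{n-2j}{\sqrt{j(n-j)}}\sqrt{2a_2}-a_1\bigr)$, which is the assertion. I expect this parity bookkeeping — rather than the ODE or the residue — to be the only real obstacle; the low–degree cases listed before the Proposition already serve as a check on the normalisation.
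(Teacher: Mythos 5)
Your argument is correct, and it supplies something the paper itself never provides: Proposition \ref{jibformula} appears in the paper with no proof at all, supported only by the ``evidence'' of the factorized expressions for $m_1,\dots,m_9$ and the observed shape of the coefficients at $a_2$, while the nearby closed form (Proposition \ref{eq:log-Abel}) is quoted from Roman's umbral calculus and its equivalence with the product formula is never carried out. Your route is self-contained and genuinely different: you read off the functional equation $e(s+t)=e(s)e'(t)+e'(s)e(t)-a_1e(s)e(t)$ directly from the Abel shape \eqref{eq:F_Ab}, extract the constant-coefficient ODE $e''=a_1e'+2a_2e$ (the identification $\beta=2a_2$ via $\omega\omega'=a_1\omega+\beta x$ is right), solve it as $e(u)=(e^{r_+u}-e^{r_-u})/D$, and apply Lagrange inversion $n\,m_{n-1}=\mathrm{Res}_{u}\,e(u)^{-n}$ to get your $(\ast)$. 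As a byproduct, $(\ast)$ proves a corrected form of Proposition \ref{eq:log-Abel}: it agrees with the paper's binomial formula once one takes $u=\alpha-\sqrt{\beta}$ and $v=2\sqrt{\beta}$ there, rather than the $u=\alpha$, $v=\sqrt{\beta}$ printed in the paper, which is inconsistent with the paper's own expansion $t-\frac{2u+v}{2}t^2+\cdots$ (your $n=2$ check exposes exactly this). The only step you assert without displaying is the final ``parity bookkeeping,'' and it does close: for odd $n=2m+1$ the pairing scalar is $\prod_{j=1}^{m}4j(n-j)=2^{n-1}(n-1)!$, which exactly cancels the prefactor $\frac{(-1)^{n-1}}{(n-1)!\,2^{n-1}}$; for even $n=2m$ one has $\prod_{j=1}^{m-1}4j(n-j)=2^{n-2}(n-1)!/m$, and this combines with the prefactor and your unpaired central factor $na_1$ to give precisely the unpaired target factor $-a_1$. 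You also correctly noted that the ``$n-2i$'' in the published statement is a typo for $n-2j$; with that reading, your proof establishes the proposition.
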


An equivalent formula is

$$
m_n=\frac{1}{n+1}\prod_{r=\frac{1-n}{n+1},\frac{3-n}{n+1},\cdots,\frac{n-3}{n+1},\frac{n-1}{n+1}}\left(\frac{2r}{\sqrt{1-r^2}}\sqrt{2a_2}-a_1\right).
$$

\bigskip

The corresponding two-parameter genus
$MU_*\rightarrow \mathbb{Q}[a,b]$
generalizes classical Todd genus of the multiplicative formal group law $\mathcal{F}_m(x,y)=x+y+txy$ over coefficient ring $Z[t,t^{-1}].$  The exponential of the Abel formal group law is
$$
exp_{Ab}(t)=\frac{e^{\alpha t}(e^{\sqrt{\beta}t}-1)}{\sqrt{\beta}}=\frac{e^{at}-e^{bt}}{a-b},
$$

where $\alpha=a_1/2$, $\beta=2a_2+1/4a_1^2$, $a=\alpha+\sqrt{\beta}$, $b=\alpha-\sqrt{\beta}.$

\medskip

This formal group law is named by  V. M. Buchstaber because of Abel's study of a functional equation that this exponential
satisfies.  The coefficient ring $\Lambda_{Ab}$ of $\mathcal{F}_{Ab}$ and its localizations at primes have been computed In \cite{BU-KH}.

\bigskip

The following "numerical" characterization of $\Lambda_{Ab}$ is given in \cite{CL-J0}.

\begin{proposition}
$\Lambda_{Ab}$ consists of those symmetric polynomials in $\mathbb{Q}[a,b]$ such that $f(kt,lt)\in \mathbb{Z}[t,(k-l)^{-1}]$ for any
integers $k,l$ such that $k\neq l.$
\end{proposition}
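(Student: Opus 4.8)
The plan is to prove the two inclusions separately. Write $R=\mathbb{Q}[a,b]$, let $\phi\colon MU_*\to R$ be the genus classifying $\mathcal F_{Ab}$ (it exists and is unique by Quillen's identification of $MU_*$ with the Lazard ring \cite{Q}), and set $\Lambda_{Ab}=\phi(MU_*)$, the subring generated by the coefficients of $\mathcal F_{Ab}$, together with $N=\{f\in R : f \text{ symmetric},\ f(kt,lt)\in\mathbb{Z}[t,(k-l)^{-1}]\ \forall\, k\ne l\in\mathbb{Z}\}$. First I would record two reductions. Since $\exp_{Ab}(t)=\frac{e^{at}-e^{bt}}{a-b}$ is invariant under $a\leftrightarrow b$, every coefficient of $\mathcal F_{Ab}$, hence every element of $\Lambda_{Ab}$, is symmetric; and because everything is graded with $\deg a=\deg b$, a homogeneous $f$ of degree $d$ satisfies $f(kt,lt)=t^{d}f(k,l)$, so the defining condition of $N$ is equivalent to $f(k,l)\in\mathbb{Z}[(k-l)^{-1}]$ for all integers $k\ne l$. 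It therefore suffices to compare, degree by degree, the two full-rank lattices $\Lambda_{Ab}$ and $N$ inside $R^{\mathrm{sym}}=\mathbb{Q}[a,b]^{\mathrm{sym}}$.

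For the inclusion $\Lambda_{Ab}\subseteq N$ I would first put the group law in closed form. Writing $\omega(x)=\partial_y\mathcal F_{Ab}(x,0)=1+a_1x+a_2x^2+\cdots$, the relation $\exp_{Ab}''=(a+b)\exp_{Ab}'-ab\,\exp_{Ab}$ gives $\omega\,\omega'=(a+b)\omega-ab\,x$, and solving the resulting linear system yields $e^{a\log_{Ab}(x)}=\omega(x)-bx$ and $e^{b\log_{Ab}(x)}=\omega(x)-ax$, whence
\[
\mathcal F_{Ab}(x,y)=x\,\omega(y)+y\,\omega(x)-(a+b)\,xy .
\]
Now fix integers $k\ne l$ and specialize $a\mapsto k$, $b\mapsto l$. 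Eliminating $\log_{Ab}$ from the two exponential identities gives a \emph{polynomial} equation for $\omega$, namely $(\omega-lx)^{l}=(\omega-kx)^{k}$ when $k,l\ge 0$ (and the evident cleared-denominator variants when the signs differ), with value $0$ and $\omega$-derivative $\pm(k-l)$ at $(x,\omega)=(0,1)$. Since $k-l$ is a unit in $\mathbb{Z}[(k-l)^{-1}]$, Hensel's lemma for power series produces a unique solution $\omega\in\mathbb{Z}[(k-l)^{-1}][[x]]$ with $\omega(0)=1$; hence the specialized group law lies in $\mathbb{Z}[(k-l)^{-1}][[x,y]]$. By Quillen's universality the specialized genus $MU_*\to\mathbb{Z}[(k-l)^{-1}]$ equals $\phi$ followed by $a\mapsto k,\,b\mapsto l$, so $\phi([M])(k,l)\in\mathbb{Z}[(k-l)^{-1}]$ for every $[M]$, i.e.\ $\Lambda_{Ab}\subseteq N$.

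For the reverse inclusion $N\subseteq\Lambda_{Ab}$ I would argue one prime at a time: it suffices to show $N\otimes\mathbb{Z}_{(p)}\subseteq\Lambda_{Ab}\otimes\mathbb{Z}_{(p)}$ for each $p$. Here I would invoke the explicit computation of $\Lambda_{Ab}$ and its $p$-localizations in \cite{BU-KH}. The explicit logarithm of Proposition \ref{jibformula} is the key computational input: via Mishchenko's formula $\phi([\CP^{n}])=(n+1)m_n$ it determines the images of the $[\CP^{n}]$, and hence a concrete rational basis of $R^{\mathrm{sym}}$ in each degree against which the $p$-local generators of \cite{BU-KH} can be written. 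Expanding a homogeneous $f\in N$ of degree $d$ in this basis, membership in $\Lambda_{Ab}\otimes\mathbb{Z}_{(p)}$ becomes finitely many $p$-integrality conditions on the expansion coefficients, and the plan is to recover exactly these from the numerical conditions by choosing well-adapted test pairs $(k,l)$ — for instance pairs with $k\equiv l\pmod{p^{j}}$, which probe the $p$-divisibility of $k-l$ in the denominators and so read off successive $p$-adic congruences.

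I expect the \textbf{main obstacle} to be precisely this sufficiency step: proving that the countable family of integer specializations \emph{detects} the lattice $\Lambda_{Ab}$ — equivalently, that $N$ contains no rational classes beyond $\Lambda_{Ab}$. This is the two-variable, $(k-l)^{-1}$-twisted analogue of Pólya's description of integer-valued polynomials, and carrying it out rigorously requires matching the $p$-adic congruences produced by the test pairs $(k,l)$ term by term against the generators furnished by \cite{BU-KH}; the inclusion $\Lambda_{Ab}\subseteq N$ and the equality of rational spans then upgrade this congruence matching to the desired equality $N=\Lambda_{Ab}$.
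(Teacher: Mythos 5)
Your first inclusion, $\Lambda_{Ab}\subseteq N$, is essentially correct and is a nice self-contained argument: the closed form $\mathcal F_{Ab}(x,y)=x\omega(y)+y\omega(x)-(a+b)xy$, the identities $e^{a\log_{Ab}(x)}=\omega(x)-bx$ and $e^{b\log_{Ab}(x)}=\omega(x)-ax$ (both are easily verified from $\exp_{Ab}(t)=\frac{e^{at}-e^{bt}}{a-b}$), and the Hensel-type implicit function argument showing that after specializing $a\mapsto k$, $b\mapsto l$ the series $\omega$, hence the whole group law, lies in $\mathbb Z[(k-l)^{-1}][[x,y]]$, all check out; the sign case analysis and the reduction of the $t$-condition to homogeneous pieces are routine. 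Note, for what it is worth, that the paper itself offers no proof of this proposition: it is imported from Clarke and Johnson \cite{CL-J0}, so there is no internal argument to compare yours against.

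The genuine gap is the reverse inclusion $N\subseteq\Lambda_{Ab}$, which is the substantive content of the statement, and your text does not prove it — it only describes a plan and then, candidly, flags the decisive step as an unresolved obstacle. Concretely: you propose to localize at each prime, expand a homogeneous $f\in N$ in a basis obtained from Proposition \ref{jibformula} via Mishchenko's formula, and match the resulting $p$-integrality conditions against the generators of $\Lambda_{Ab}\otimes\mathbb Z_{(p)}$ from \cite{BU-KH} by testing pairs with $k\equiv l\pmod{p^{j}}$. But the assertion that such test pairs \emph{detect} $\Lambda_{Ab}\otimes\mathbb Z_{(p)}$ — i.e.\ that every symmetric polynomial passing these countably many specialization tests actually lies in the image of $BP_*\to\mathbb Q[a,b]$ (equivalently $MU_*\otimes\mathbb Z_{(p)}$) — is precisely the proposition restricted to one prime, restated rather than argued; nothing in the outline forces the congruences extracted from integer pairs to coincide with the conditions cutting out $\Lambda_{Ab}\otimes\mathbb Z_{(p)}$ inside its rational span, and this Pólya-type sufficiency is exactly where the work lies. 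That hard direction is established in \cite{CL-J0}, not in \cite{BU-KH}, so as written your attempt amounts to a correct proof of the easy half together with a deferral of the hard half to the very result being proved.
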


\bigskip

 It is pointed out in \cite{K-N}, \cite{roindiss} that the Sheffer sequence associated with $exp_{Ab}(t)$ gives the Gould polynomials $G_k(x,u,v)$ for $u=\alpha$,  $v=\sqrt{\beta}$.

\begin{proposition}
The logarithm $log_{Ab}(t)$ of the Abel universal formal group law, i.e. the inverse to the exponent $exp_{Ab}(t)=\frac{e^{ut}(e^{vt}-1)}{v}$,
is the generating function for $G_k(x,u,v)$
$$
log_{Ab}(t)=\sum_{k\geq 1}\frac{\partial{G_k(x,u,v)}}{\partial{x}}(0,u,v)v^k t^k
$$
and can be given by applying $\frac{\partial{e^{xlog_{Ab}(t)}}}{\partial{x}}$ at $x=0$ to the left and right sides of the equation
$$
\sum_{k\geq 0}\frac{G_k(x,u,v)}{k!}v^k t^k=e^{xlog_{Ab}(t)}.
$$
\end{proposition}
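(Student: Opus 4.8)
The plan is to reduce the statement to the single exponential-generating-function identity satisfied by the associated (basic) sequence of a delta series, which is the umbral-calculus fact underlying the remark preceding the proposition. Recall that if $p_n(x)$ is the associated sequence of a delta series $f(t)$, i.e. $f(0)=0$ and $f'(0)\neq0$, then
\[
\sum_{n\ge0}\frac{p_n(x)}{n!}\,t^n=e^{x\bar f(t)},
\]
where $\bar f$ denotes the compositional inverse of $f$; no prefactor appears here precisely because the Gould polynomials form an associated, binomial-type sequence rather than a general Sheffer sequence. I would take this identity as the starting point.

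The first step is to identify the delta series explicitly. By the identification recalled above (from \cite{K-N}, \cite{roindiss}) the Gould polynomials $G_k(x,u,v)$ are the associated sequence of $f(t)=e^{ut}(e^{vt}-1)$, and a one-line comparison with the exponential gives $f(t)=v\,\exp_{Ab}(t)$, since $v\,\exp_{Ab}(t)=v\cdot\frac{e^{ut}(e^{vt}-1)}{v}=e^{ut}(e^{vt}-1)$. Hence its compositional inverse is $\bar f(t)=\log_{Ab}(t/v)$: indeed $f\big(\log_{Ab}(t/v)\big)=v\,\exp_{Ab}\big(\log_{Ab}(t/v)\big)=v\cdot(t/v)=t$, using only that $\log_{Ab}$ is by definition the inverse of $\exp_{Ab}$. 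Substituting $\bar f$ into the generating-function identity gives $\sum_k G_k(x,u,v)\,t^k/k!=e^{x\log_{Ab}(t/v)}$, and replacing $t$ by $vt$ produces exactly
\[
\sum_{k\ge0}\frac{G_k(x,u,v)}{k!}\,v^k t^k=e^{x\log_{Ab}(t)}.
\]

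The second step is purely formal: apply $\partial/\partial x$ to both sides and set $x=0$. On the right $\frac{\partial}{\partial x}e^{x\log_{Ab}(t)}=\log_{Ab}(t)\,e^{x\log_{Ab}(t)}$ specialises to $\log_{Ab}(t)$ at $x=0$; on the left the differentiation acts term by term, and since $G_0\equiv1$ the $k=0$ summand vanishes, so the sum starts at $k=1$. Comparing the two sides then expresses $\log_{Ab}(t)$ as the claimed series built from the linear coefficients $\frac{\partial G_k}{\partial x}(0,u,v)$, which simultaneously exhibits the logarithm as the generating function of these coefficients of the Gould polynomials.

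I expect the only genuine obstacle to be the bookkeeping of normalizations. One must verify that the Gould polynomials in \cite{K-N}, \cite{roindiss} are taken with the convention making them the associated sequence of $e^{ut}(e^{vt}-1)$ (equivalently of $v\,\exp_{Ab}$), and then carry the rescaling $t\mapsto vt$, i.e. the factors $v^k$, consistently through the differentiation step. Once the identity $f=v\,\exp_{Ab}$ is established, the inverse computation and the term-by-term differentiation are routine.
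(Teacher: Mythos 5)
Your proposal is correct and is essentially the paper's own argument: the paper supplies no separate proof (the proposition's wording itself prescribes the derivation), and you fill in precisely the intended steps — the associated-sequence generating-function identity, the identification $e^{ut}(e^{vt}-1)=v\,exp_{Ab}(t)$, hence $\bar f(t)=\log_{Ab}(t/v)$, the rescaling $t\mapsto vt$, and differentiation at $x=0$. One bookkeeping point, of exactly the kind you yourself promised to track: term-by-term differentiation leaves the factorials in place, giving $\log_{Ab}(t)=\sum_{k\ge1}\frac{\partial G_k}{\partial x}(0,u,v)\,\frac{v^k t^k}{k!}$, which is consistent with the paper's subsequent closed formula $\log_{Ab}(t)=\sum_{k\ge1}\frac1v\binom{-(v+uk)/v}{k-1}\frac{v^k}{k}\,t^k$ but differs from the proposition's first display by the factor $1/k!$; that display is therefore a misprint (it is inconsistent with the very identity it is obtained from), and your write-up should state this correction explicitly rather than assert that the differentiated series coincides with the printed one verbatim.
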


\medskip

Now one can easily deduce from \cite{Ro}

\begin{proposition}
\label{eq:log-Abel} The logarithmic series $log_{Ab}(t)$ of the universal Abel formal group law is given by
$$log_{Ab}(t)=\sum_{k\geq 1}1/v \binom{-(v+uk)/v}{k-1}\frac{v^k}{k}t^k,$$ where $u=\alpha$, and $v=\sqrt{\beta }$.
\end{proposition}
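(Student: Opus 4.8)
The plan is to read off the coefficients of $\log_{Ab}$ directly, as the compositional inverse of the exponential $\exp_{Ab}(t)=\frac{e^{ut}(e^{vt}-1)}v$ supplied by the preceding proposition, by Lagrange inversion; the very same answer also falls out of the Gould-polynomial description combined with Roman's explicit formula \cite{Ro}, which is the sense in which one ``deduces it from \cite{Ro}''. First I would note that $\exp_{Ab}(t)=t\,\phi(t)$ with $\phi(t)=\frac{e^{ut}(e^{vt}-1)}{vt}$ and $\phi(0)=1$, so $\exp_{Ab}$ is invertible and the Lagrange--B\"urmann formula applies:
\[
[t^k]\log_{Ab}(t)=\frac1k\,[s^{k-1}]\left(\frac{s}{\exp_{Ab}(s)}\right)^{k}.
\]

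Next I would simplify the bracketed series. Since $\frac{s}{\exp_{Ab}(s)}=\frac{vs}{e^{us}(e^{vs}-1)}$, one has $\left(\frac{s}{\exp_{Ab}(s)}\right)^k=(vs)^k e^{-uks}(e^{vs}-1)^{-k}$, and after the rescaling $w=vs$ the extraction becomes $v^{k-1}$ times $[w^{k-1}]\bigl(\frac{w}{e^{w}-1}\bigr)^k e^{-(u/v)kw}$, the factors of $v^k$ cancelling inside. The single genuine computation is to evaluate this last coefficient. I would carry it out by the substitution $z=e^{w}-1$, that is $w=\log(1+z)$, $dw=\frac{dz}{1+z}$, $e^{-(u/v)kw}=(1+z)^{-uk/v}$: writing the coefficient as the residue $\frac1{2\pi i}\oint\frac{e^{-(u/v)kw}}{(e^w-1)^k}\,dw$ and transforming it yields $\frac1{2\pi i}\oint\frac{(1+z)^{-uk/v-1}}{z^k}\,dz=[z^{k-1}](1+z)^{-uk/v-1}$, which is exactly the generalized binomial coefficient $\binom{-uk/v-1}{k-1}=\binom{-(v+uk)/v}{k-1}$.

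Collecting the factors then gives $[t^k]\log_{Ab}(t)=\frac1k\,v^{k-1}\binom{-(v+uk)/v}{k-1}=\frac1v\binom{-(v+uk)/v}{k-1}\frac{v^k}k$, which is the assertion; the cases $k=1,2$ can be checked by hand against the low-degree expansion of $\exp_{Ab}$. Finally I would spell out the equivalent route the paper alludes to: by the preceding proposition $\log_{Ab}$ is the coefficient of $x$ in the associated (Gould) sequence of the delta series $f(t)=e^{ut}(e^{vt}-1)$, and differentiating Roman's closed form for that sequence at $x=0$ reproduces the same binomial coefficient. The hard part is the lone coefficient-extraction identity, and the substitution $z=e^{w}-1$, which turns the Bernoulli-type factor $\bigl(\frac{w}{e^w-1}\bigr)^k$ into an honest binomial series $(1+z)^{-uk/v-1}$, is precisely what makes it routine; everything else is bookkeeping of the powers of $v$.
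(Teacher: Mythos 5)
Your proof is correct, but it is genuinely more self-contained than what the paper does: the paper offers no computation at all for this proposition, only the remark that it ``can easily be deduced'' from \cite{Ro}, i.e.\ from Roman's closed form for the Gould polynomials $G_k(x,u,v)$ (the associated sequence of the delta series $e^{ut}(e^{vt}-1)$), combined with the preceding proposition expressing $\log_{Ab}(t)$ through $\frac{\partial G_k}{\partial x}(0,u,v)$. Your argument replaces that citation with a direct Lagrange--B\"urmann computation: the inversion formula $[t^k]\log_{Ab}(t)=\frac1k\,[s^{k-1}]\bigl(s/\exp_{Ab}(s)\bigr)^k$, the rescaling $w=vs$, and the key substitution $z=e^w-1$ in the residue, which turns $\bigl(w/(e^w-1)\bigr)^k e^{-(u/v)kw}$ into the binomial series $(1+z)^{-uk/v-1}$ and produces exactly $\binom{-(v+uk)/v}{k-1}$; your bookkeeping of the powers of $v$ is right, and the result agrees with the paper's displayed low-order terms $t-\frac{2u+v}{2}t^2+\frac{(3u+v)(3u+2v)}{3!}t^3-\cdots$. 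The two routes coincide at bottom --- Roman's formula for associated sequences is itself proved by the transfer formula, which is umbral Lagrange inversion --- but yours buys a proof readable without any umbral-calculus apparatus, while the paper's buys brevity and records the connection to Gould polynomials that it wants for its own sake; your closing paragraph correctly identifies this equivalence, so nothing is missing.
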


\medskip

Some few terms of $log_{Ab}(t)$ are
$$
log_{Ab}(t)=t-\frac{2u+v}{2}t^2+\frac{(3u+v)(3u+2v)}{3!}t^3-\frac{(4u+v)(4u+2v)(4u+3v)}{4!}t^4+\cdots
$$

The cohomological realizability of $\mathcal{F}_{Ab}$ and $\mathcal{F}_{2}$ localized away from 2 was sketched by Nadiradze, (see \cite{roindiss}, p. 41). A detailed proof of realizability of $\mathcal{F}_{Ab}$ is given by Busato in \cite{busato} using entirely different considerations.

\bigskip

\bigskip

We now turn to $p$-typization of the Abel formal group law. This is the p-typical formal group law with the logarithm
$$
t+m_{p-1}t^p+m_{p^2-1}t^{p^2}+m_{p^3-1}t^{p^3}+\cdots.
$$
So it is straightforward to calculate its coefficients and images of the Hazewinkel generators $v_n$ \cite{HAZ}
under the homomorphism from $BP_*=\mathbb{Z}_{(p)}[v_1,v_2\cdots]$, $|v_n|=2(p^n-1)$ to $\mathbb{Z}_p[a_1,a_2]$ which classifies this $p$-typical formal group law. For example, one has for $p=2$:
\begin{align*}
&v_1\mapsto -a_1,\\
&v_2\mapsto \frac{4}{3}a_1a_2,\\
&v_3\mapsto \frac{284}{105}a_1^5a_2-\frac{808}{105}a_1^3a_2^2+\frac{128}{35}a_1a_2^3,\\
&v_4\mapsto \frac{184108}{45045}a_1^{13}a_2-\frac{1108792}{15015}a_1^{11}a_2^2+\frac{4521344416}{10135125}a_1^9a_2^3-\frac{1265861152}{1126125}a_1^7a_2^4
+\frac{107918689792}{91216125}a_1^5a_2^5,\\
&etc.
\end{align*}

Using these expressions, we can then compute generators of the ideal in $BP_*$ which is the kernel
of the classifying map, thus presenting the coefficient ring of our two-parameter formal group by
generators and relations.

The explicit computation of these relations for $p=2$ in low degrees motivates the following

\begin{conjecture}
\label{qring}
The coefficient ring of the $2$-typization of the universal Abel formal group law is isomorphic to the quotient
$$\Lambda =\mathbb{Z}_{(2)}[v_1,v_2\cdots]/R,$$
with $|v_i|=2(2^n-1)$ and all generating relations are given by
$
v_1v_i^2v_j^2\sim P, \,\,\,1\leq i<j,
$
where $P$ consists of monomials not divisible by any $v_1v_{i'}^2v_{j'}^2 $, with $1\leq i'<j'$.
\end{conjecture}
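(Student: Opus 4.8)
\emph{Proof proposal.} The plan is to realize the coefficient ring of the $2$-typization as the image of an explicit graded classifying homomorphism and then to identify its kernel with the ideal $R$. First I would write down $\phi\colon BP_*\to\mathbb{Z}_{(2)}[a_1,a_2]$ explicitly: the $2$-typical logarithm is $t+l_1t^2+l_2t^4+\cdots$ with $l_k=m_{2^k-1}$ read off from Proposition~\ref{jibformula} (equivalently Proposition~\ref{eq:log-Abel}), and feeding these $l_k$ into the Hazewinkel recursion~\eqref{eq:recursive} --- solved in closed form in Proposition~\ref{solution} --- gives each $\phi(v_n)$ as a polynomial in $a_1,a_2$. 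The structural features I would record are: $\phi$ is graded with $|a_1|=2$, $|a_2|=4$; $\phi(v_1)=-a_1$; and for $n\ge 2$ the pure $a_1$-power cancels, so that $\phi(v_n)=a_1a_2\,g_n(a_1,a_2)$ with $g_n\in\mathbb{Z}_{(2)}[a_1,a_2]$, the denominators being odd because the Hazewinkel generators are $2$-integral. Consequently the coefficient ring is $\operatorname{Im}\phi=BP_*/\ker\phi$, and Conjecture~\ref{qring} is \emph{equivalent} to the single assertion $\ker\phi=R$.

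The heart of the argument is a basis statement for the image, carried out degree by degree. Write $\mathcal{I}_d\subseteq\Gamma_d:=\bigoplus_{i+2j=d/2}\mathbb{Z}_{(2)}\,a_1^ia_2^j$ for the degree-$d$ part of $\operatorname{Im}\phi$. I would prove that the \emph{good} monomials of degree $d$ (those not divisible by any $v_1v_{i'}^2v_{j'}^2$) map to a $\mathbb{Z}_{(2)}$-\emph{basis} of $\mathcal{I}_d$. Linear independence over $\mathbb{Q}$ is the easy half: ordering by increasing $a_2$-power and using that $\phi(v_n)$ introduces the genuinely new monomial $a_1a_2^{2^{n-1}-1}$ gives a triangular transition matrix. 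The difficult half is \emph{integral} spanning: one must show the good images generate $\mathcal{I}_d$ over $\mathbb{Z}_{(2)}$, i.e.\ that no spurious powers of $2$ intervene between the good images and the lattice generated by \emph{all} products $\prod\phi(v_n)^{e_n}$. This is exactly a uniform control of the $2$-adic valuations of the structure constants; concretely it reduces to showing that one factor $v_1$ together with two squared factors $v_i^2,v_j^2$ ($i<j$) produces the critical valuation coincidence letting $\phi(v_1v_i^2v_j^2)$ be rewritten, while good monomials carry the minimal $2$-divisibility in their degree. I would attack this by induction on $d$ and on the number of tensor factors, reducing each step to the valuations of the extreme coefficients $c_{n,1}$ (of $a_1^{2^n-3}a_2$) and $c_{n,2^{n-1}-1}$ (of $a_1a_2^{2^{n-1}-1}$) in $\phi(v_n)$, which can be extracted in closed form from the product formula of Proposition~\ref{jibformula}.

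Granting the basis statement, the relations and their completeness follow formally. For each pair $i<j$ the element $\phi(v_1v_i^2v_j^2)$ lies in $\mathcal{I}$, hence is a \emph{unique} $\mathbb{Z}_{(2)}$-combination $\phi(P_{ij})$ of good monomials; declaring the right-hand side to be that combination defines $P_{ij}$ and establishes $v_1v_i^2v_j^2\sim P_{ij}$, with $P_{ij}$ automatically a sum of good monomials. To see that $R=(v_1v_i^2v_j^2-P_{ij}:1\le i<j)$ is \emph{all} of $\ker\phi$, I would orient each relation as a rewriting rule sending the bad monomial to its good combination and check that this rewriting terminates and is confluent, termination following from a well-founded ordering that decreases when a squared pair is replaced by the lower-degree good monomials of $P_{ij}$. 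Then $\mathbb{Z}_{(2)}[v_1,v_2,\dots]/R$ is spanned by good monomials, and by the basis statement the induced surjection onto $\operatorname{Im}\phi$ carries them to a $\mathbb{Z}_{(2)}$-basis, so it is an isomorphism; this is $\ker\phi=R$. The first nontrivial instance, $v_1^3v_2^2\sim-\tfrac{142}{303}v_1^6v_2-\tfrac{36}{101}v_2^3+\tfrac{70}{303}v_1^2v_3$ (all coefficients $2$-integral), already confirms the shape of $P_{ij}$; and since the ring genuinely carries such relations it cannot be polynomial, which is the promised obstruction to a Baas--Sullivan description.

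\emph{Main obstacle.} The whole proof stands or falls on the integral spanning in the middle paragraph: producing a uniform, all-degrees bound on the $2$-adic valuations of the products $\prod\phi(v_n)^{e_n}$ that simultaneously shows good monomials are $2$-adically primitive in their degree and that every bad monomial is redundant. This is precisely the point that the explicit low-degree computations render plausible but do not settle, which is why the statement is posed as a conjecture.
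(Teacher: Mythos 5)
First, calibration: the paper does not prove this statement at all --- it is posed as Conjecture~\ref{qring} precisely because only low-degree computations are available (the images of $v_1,\dots,v_4$ under the classifying map, the resulting relations, and the mod~$2$ presentation used in the final proposition). So there is no proof of the paper's to compare yours against; the only question is whether your proposal closes the gap, and it does not. Your framing agrees with the paper's setup: the coefficient ring is $\operatorname{Im}\phi\cong BP_*/\ker\phi$ for the classifying map $\phi\colon BP_*\to\mathbb{Z}_{(2)}[a_1,a_2]$, the conjecture amounts to $\ker\phi=R$, and the good monomials should form a $\mathbb{Z}_{(2)}$-basis of the image (this is exactly what the paper's generating-function computation presupposes). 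Your sample relation $v_1^3v_2^2\sim-\frac{142}{303}v_1^6v_2-\frac{36}{101}v_2^3+\frac{70}{303}v_1^2v_3$ is in fact correct, and it reduces mod~$2$ to the paper's relation $v_1^3v_2^2\equiv0$. But, as you yourself state, the whole argument rests on the integral spanning step --- uniform control of $2$-adic valuations of the products $\prod\phi(v_n)^{e_n}$ in all degrees --- and for that step you offer only a reformulation and a proposed induction, not an argument. That is the conjecture restated, not proved.

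Moreover, two steps you label as easy or formal are themselves incomplete. (i) Rational linear independence of the good monomials does not follow from the observation that $\phi(v_n)$ contains the new monomial $a_1a_2^{2^{n-1}-1}$: the target $\mathbb{Q}[a_1,a_2]$ has dimension only about $d/4$ in topological degree $d$, while the number of monomials in the $v_n$ grows like a partition function, so distinct monomials in the $v_n$ inevitably collide in the image; a triangularity argument needs an injective assignment of leading terms to good monomials, which you have not defined. What is really required is a count showing that the number of good monomials in each degree equals $\dim_{\mathbb{Q}}\bigl(\operatorname{Im}\phi\otimes\mathbb{Q}\bigr)_d$, i.e.\ precisely the Hilbert-series bookkeeping the paper carries out only conditionally on the conjecture. (ii) Termination and confluence of your rewriting system are asserted, not proved: your own example shows that $P_{ij}$ can contain monomials, such as $v_1^6v_2$, with a \emph{higher} power of $v_1$ than the bad monomial it replaces, so since total degree is preserved no naive ordering obviously decreases, and the overlaps $v_1v_i^2v_j^2v_k^2$ are exactly where unlisted relations could be forced, contradicting completeness of $R$. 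In short, the strategy is reasonable and consistent with everything computed in the paper, but all three pillars --- rational independence, integral spanning, and completeness of $R$ --- remain open, the second being the essential obstruction; this is why the statement is a conjecture, and your proposal leaves it one.
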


One can compute the generating function

$$
\sum_{n=0}^{\infty}rank_{\mathbb{Z}_{(2)}}(\Lambda_n)t^n.
$$

Indeed using the above
relations we can eliminate any monomials divisible by some $v_1v_i^2v_j^2$. Remaining monomials will
then form a basis of $\Lambda$ as a $\mathbb{Z}_{(2)}$ module. These monomials can be subdivided into three disjoint
sets as follows:

\begin{align*}
(a)& \text{ any monomials not divisible by } v_1;\\
(b)& \text{ monomials of the form } v_1v_{i_1}v_{i_2}\cdots v_{i_k}v_j^n \text{ and } v_1^2v_{i_1}v_{i_2}\cdots v_{i_k}v_j^n,\text{ where }\\
   &k\geq 0,\, n>1,\, v_{i_1}v_{i_2}\cdots v_{i_k} \text{ are pairwise distinct and different from 1 and from j, and}\\
(c)& \text{ monomials } v_1^nv_{i_1}v_{i_2}\cdots v_{i_k}, \text{ where } k\geq 0,\, n>0,\, v_{i_1}v_{i_2}\cdots v_{i_k}
    \text{ are pairwise distinct and } \\
   &\text{not equal to } 1.
\end{align*}

According to this, the generating function will consist of three summands, namely:

\begin{align*}
&(a)\,\,\,&\frac{1}{1-t^3}&\frac{1}{1-t^7}\frac{1}{1-t^{15}}\cdots \frac{1}{1-t^{2^n-1}}\cdots \\
&(b) \,\,\,&(t+t^2)&(\,\,\,\,\,\,\frac{t^6}{1-t^3}(1+t^7)(1+t^{15})(1+t^{31})\cdots  \\
&          &       &+(1+t^3)\frac{t^{14}}{1-t^7}(1+t^{15})(1+t^{31})\cdots  \\
&          &       &+(1+t^3)(1+t^7)\frac{t^{30}}{1-t^{15}}(1+t^{31})\cdots  \\
&          &       &+(1+t^3)(1+t^7)(1+t^{31})\frac{t^{62}}{1-t^{31}}\cdots \\
&          &       &\cdots \\
&          &       &+(1+t^3)(1+t^7)\cdots (1+t^{2^{n-1}-1})\frac{t^{2(2^n-1)}}{1-t^{2^{n-1}}}(1+t^{2^{n+1}-1}) \cdots \\
&          &       &+\cdots ),\\
& and &&\\
&(c)       &\frac{t}{1-t}&(1+t^3)(1+t^7)(1+t^{15})\cdots (1+t^{2^n-1})\cdots .
\end{align*}

If we replace here all $(1-t^{2^n-1})^{-1}$ by $\frac{1+t^{2^{n-1}}}{1-t^{2(2^n-1)}}$, in the sum of these three functions one will have
a common multiple $(1+t^3)(1+t^7)(1+t^{15})\cdots (1+t^{2^n-1})\cdots $, so that the generating function will be

\begin{align*}
(1+t^3)(1+t^7)(1+t^{15})\cdots &(1+t^{2^n-1})\cdots \Bigl( \frac{1}{1-t^6}\frac{1}{1-t^{14}}
\frac{1}{1-t^{30}}\cdots \frac{1}{1-t^{2(2^n-1)}} \cdots \\
&+(t+t^2)\Bigl( \frac{1}{1-t^2}+\frac{t^6}{1-t^6}+\frac{t^{14}}{1-t^{14}}\cdots +\frac{t^{2(2^n-1)}}{1-t^{2(2^n-1)}}+ \cdots \Bigr) \Bigr).
\end{align*}

\bigskip

\begin{proposition}
The coefficient ring  of the 2-typization of the universal Abel formal group law
 cannot be realized as the coefficient ring of a cohomology theory as a Brown-Peterson cohomology with singularities.
\end{proposition}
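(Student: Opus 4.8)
The plan is to exploit the fact that a Brown–Peterson cohomology with singularities, in the Baas–Sullivan sense, has a coefficient ring of a very restricted shape. If one starts from $BP$ and introduces singularities killing classes $x_1,x_2,\dots$, then for the theory to be defined and to have coefficient ring equal to the expected quotient, the $x_i$ must form a regular sequence of homogeneous elements, and the coefficient ring is $BP_*/(x_1,x_2,\dots)$. The invariant I would use is the Poincar\'e series $P_A(t)=\sum_n\operatorname{rank}_{\mathbb{Z}_{(2)}}(A_n)t^n$: killing a homogeneous non-zero-divisor of degree $d$ multiplies $P_A$ by $(1-t^d)$, so for any regular-sequence quotient one has
$$P_{BP_*/(x_i)}(t)=P_{BP_*}(t)\prod_i\bigl(1-t^{d_i}\bigr),\qquad d_i=\deg x_i.$$
Here $BP_*=\mathbb{Z}_{(2)}[v_1,v_2,\dots]$ with $v_n$ of weight $2^n-1$ in the grading used above, so $P_{BP_*}(t)=\prod_{n\ge1}\bigl(1-t^{2^n-1}\bigr)\1$.

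First I would reduce the assertion to a purely numerical one. Writing $\Lambda$ for the coefficient ring in question, form
$$R(t)=\frac{P_\Lambda(t)}{P_{BP_*}(t)}.$$
If $\Lambda$ were realized as a $BP$-theory with singularities, then $R(t)=\prod_i(1-t^{d_i})$ for some multiset of positive integers $d_i$. Since every power series with constant term $1$ has a unique factorization $R(t)=\prod_{d\ge1}(1-t^d)^{e_d}$ with $e_d\in\mathbb{Z}$, the series $R$ is of the required product form \emph{if and only if} all $e_d\ge0$. So the whole problem becomes: compute the canonical exponents $e_d$ and exhibit one that is negative.

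Next I would compute $R(t)$ from the explicit generating function for $P_\Lambda(t)$ derived above together with $P_{BP_*}(t)$. The cancellation is clean: the first term of the generating function is exactly $(1-t)P_{BP_*}(t)$, and after simplification one gets
$$R(t)=(1-t)+t\,T(t)\bigl(1+(1-t^2)S(t)\bigr),\qquad T(t)=\prod_{n\ge2}\bigl(1-t^{2(2^n-1)}\bigr),\quad S(t)=\sum_{n\ge2}\frac{t^{2(2^n-1)}}{1-t^{2(2^n-1)}}.$$
Expanding to low order yields $R(t)=1-t^{9}-t^{17}-t^{21}+2\,t^{23}+\cdots$. Stripping off $(1-t^{9})$, $(1-t^{17})$ and $(1-t^{21})$, each with exponent $1$, leaves $1+2t^{23}+\cdots$, which forces $e_{23}=-2<0$. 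Hence $R(t)$ is not a product of factors $(1-t^{d})$, so $\Lambda$ is not a regular-sequence quotient of $BP_*$ and therefore cannot be the coefficient ring of a $BP$-cohomology with singularities. The conceptual reason I would stress is that the offending degree is precisely the first syzygy degree of the defining relations: the leading monomials $v_1^3v_2^2$ (degree $9$) and $v_1^3v_3^2$ (degree $17$) have least common multiple $v_1^3v_2^2v_3^2$ of degree $23$, and the three relations $v_1v_i^2v_j^2\sim P$ with $(i,j)=(1,2),(1,3),(2,3)$ all reach this monomial, producing two independent low-degree syzygies; a regular sequence with generators in degrees $9,17,21$ could only have syzygies in degrees $\ge 9+17=26$, so a syzygy at $23$ is flatly impossible.

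The main obstacle in making this rigorous is twofold. The first part is structural: I must justify the Baas--Sullivan principle that a $BP$-theory with singularities has coefficient ring a regular-sequence (complete intersection) quotient of $BP_*$, which is what makes the Poincar\'e-series factorization available; this is where I would lean on the references of Baas and Stong. The second part is that $P_\Lambda(t)$ should enter only through data that is genuinely established rather than merely conjectural. This turns out to be harmless, since only the relations in degrees $9,17,21$ and the series through degree $23$ are used: it suffices to verify those few low-degree relations by the direct computation of the images of $v_1,v_2,v_3$ in $\mathbb{Z}_{(2)}[a_1,a_2]$, and the full Conjecture \ref{qring} is not needed.
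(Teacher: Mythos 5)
Your argument is correct and reaches the right conclusion, but by a genuinely different route from the paper's. The paper works directly with the relation ideal: it reduces $\Lambda$ modulo $2$, lists the computed low-degree relations $v_1^3v_2^2$, $v_1^3v_3^2+v_1^2v_2^5$, $v_1v_2^2v_3^2+v_1^2v_2^4v_3+v_2^7,\dots$, and observes that the third gives $v_2^7\equiv0$ modulo $v_1$, so these generators cannot form a regular sequence in $BP_*$; this is short, needs no generating function, and uses only relations that were computed anyway. You instead use a Hilbert-series invariant: a regular-sequence quotient forces $P_\Lambda(t)=P_{BP_*}(t)\prod_i(1-t^{d_i})$, and the unique $(1-t^d)$-factorization of $R(t)=P_\Lambda(t)/P_{BP_*}(t)$ has a negative exponent. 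Your expansion checks out: $R(t)=(1-t)+t\,T(t)\bigl(1+(1-t^2)S(t)\bigr)=1-t^9-t^{17}-t^{21}+2t^{23}+\cdots$, and stripping $(1-t^9)(1-t^{17})(1-t^{21})$ leaves $1+2t^{23}+\cdots$, forcing $e_{23}=-2$. Each approach buys something. Yours is generator-independent: the Poincar\'e series depends only on the ideal, whereas the paper's argument literally shows only that one particular generating set fails to be regular, and promoting that to \emph{every} generating set needs a standard but unstated fact about minimal homogeneous generating sets of complete-intersection ideals; your argument also works integrally over $\mathbb{Z}_{(2)}$, avoiding the (true, but also unstated) point that a regular sequence with torsion-free quotient stays regular modulo $2$. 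Conversely, the paper's proof is much shorter and rests only on verified relations, while yours needs $P_\Lambda(t)$ through degree $23$, which in the paper is derived from Conjecture \ref{qring}; your closing remark correctly reduces this to a finite check, but note the check must cover the whole ideal in weights $\le 23$ (absence of further generators and the exact syzygies), not merely the three relations in degrees $9,17,21$. In the end the two proofs see the same obstruction: modulo $2$ one has the degree-$23$ syzygy $v_1v_2^2v_3\cdot(v_1^3v_2^2)+v_2^2\cdot(v_1^3v_3^2+v_1^2v_2^5)+v_1^2\cdot(v_1v_2^2v_3^2+v_1^2v_2^4v_3+v_2^7)\equiv0$, which is exactly the paper's ``$v_2^7\equiv0$ mod $v_1$'' phenomenon living in your forbidden degree $23<9+17$. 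Both proofs equally rely on the structural premise, cited rather than proved, that a Baas--Sullivan $BP$-theory with singularities has coefficient ring $BP_*$ modulo a regular sequence.
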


\begin{proof}

Reducing modulo $2$ the ring $\Lambda=\mathbb{Z}_{(2)}[v_1,v_2\cdots]/R$ we have

\begin{equation*}
\Lambda /2\Lambda \cong \mathbb{F}_2 [v_1,v_2,v_3,\cdots ]/(v_1^3v_2^2, v_1^3v_3^2+v_1^2v_2^5, v_1v_2^2v_3^2+v_1^2v_2^4v_3+v_2^7, v_1^3v_4^2+v_1^2v_2v_3^4,\cdots).
\end{equation*}

 We see that $v_2^7\equiv 0$ modulo $v_1$, i.e., the last sequence of generating relations is not regular in $BP_*$.

\end{proof}
\bigskip

\end{document}